\documentclass[11pt]{amsart}

\input{xypic}




\newtheorem{theorem}{Theorem}[section]
\newtheorem{lemma}[theorem]{Lemma}
\newtheorem{corollary}[theorem]{Corollary}
\newtheorem{proposition}[theorem]{Proposition}
\theoremstyle{remark}
\newtheorem{remark}[theorem]{Remark}
\theoremstyle{remark}




\newcommand\md{{\text{\rm mod}}} 
\newcommand\Ab{{{\mathcal A}b}} 




\newcommand{\Hom}{\mathop{\rm Hom}\nolimits} 
\newcommand{\Ker}{\mathop{\rm Ker}\nolimits} 
\newcommand{\ima}{\mathop{\rm im}\nolimits} 
\newcommand{\Add}{\mathop{\rm Add}\nolimits} 
\newcommand{\aadd}{\mathop{\rm add}\nolimits} 
\newcommand\colim{\mathop{\rm colim}} 
\newcommand\hocolim{\mathop{\rm hocolim}} 




\newcommand\opp{{^{\text{\rm op}}}} 




\newcommand\CA{{\mathcal A}}

\newcommand\CC{{\mathcal C}}

\newcommand\FF{{\mathfrak F}} 

\newcommand\FG{{\mathfrak G}}

\newcommand\CP{{\mathcal P}}
\newcommand\CQ{{\mathcal Q}}

\newcommand\CS{{\mathcal S}}
\newcommand\CT{{\mathcal T}}



\newcommand\N{{\mathbb N}} 
\newcommand\Z{{\mathbb Z}} 



\begin{document}

\title[On perfectly generating projective classes]{On perfectly generating projective classes in triangulated categories}

\author{George Ciprian Modoi}
\thanks{The author was supported by the grant PN2CD-ID-489}

\address{"Babe\c s-Bolyai" University, Faculty of Mathematics and Computer Science, Chair of Algebra\\ 1, M. Kogalniceanu, RO-400084, Cluj-Napoca
\\ Romania}

\email[George Ciprian Modoi]{cmodoi@math.ubbcluj.ro}

\subjclass[2000]{18E15, 18E35, 18E40, 16D90}

\keywords{triangulated category with coproducts, perfect
projective class, Brown representability}

\begin{abstract}
We say that a projective class in a triangulated category with
coproducts is perfect if the corresponding ideal is closed under
coproducts of maps. We study perfect projective classes and the
associated phantom and cellular towers. Given a perfect generating
projective class, we show that every object is isomorphic to the
homotopy colimit of a cellular tower associated to that object.
Using this result and the Neeman's Freyd--style representability
theorem we give a new proof of Brown Representability Theorem.
\end{abstract}
\maketitle

\section*{Introduction}

The notion of projective classes in pointed categories goes back
to Eilenberg and Moore \cite{EM}. In this paper we consider
projective classes in a category $\CT$ which is triangulated. In
this settings projective classes may be defined as pairs
$(\CP,\FF)$, with $\CP\subseteq\CT$ a class of objects and
$\FF\subseteq\CT^\to$ a class of maps (here $\CT^\to$ is the
category of all maps in $\CT$) such that $\CP$ is closed under
direct factors, $\FF$ is an ideal (that means $\phi,\phi\in\FF$,
and $\alpha,\beta\in\CT^\to$, implies
$\phi+\phi',\alpha\phi\beta\in\FF$, whenever the operations are
defined), the composite $p\to x\stackrel{\phi}\to x'$ is zero for
all $p\in\CP$ and all $\phi\in\FF$, and each object $x\in\CT$ lies
in an exact triangle $\Sigma^{-1}x'\to p\to x\stackrel{\phi}\to
x'$, with $p\in\CP$ and $\phi\in\FF$. Note also that all
projective classes which we deal with are stable under suspensions
and desuspensions in $\CT$. Fix an object $x\in\CT$. Choosing
repeatedly triangles as above, we construct two towers in $\CT$
associated to $x$, namely the phantom and the cellular tower. The
whole construction is similar to the choice of a projective
resolution for an object in an abelian category.

Let $\kappa$ be a regular cardinal. We say that a projective class
$(\CP,\FF)$ is $\kappa$--perfect, provided that the ideal $\FF$ is
closed under $\kappa$--coproducts in $\CT^\to$, that is coproducts
of less that $\kappa$ maps, respectively perfect if it is
$\kappa$-perfect for all cardinals $\kappa$. For projective
classes which are induced by sets our definition of perfectness is
equivalent to that of \cite{KL}, explaining our terminology.
Further we say that $(\CP,\FF)$ generates $\CT$ if for any
$x\in\CT$, $\CT(\CP,x)=0$ implies $x=0$. It seems that an
important role is played by $\aleph_1$--perfect projective
classes, that means projective classes $(\CP,\FF)$ with $\FF$
closed under countable coproducts. In this case we prove that the
homotopy colimit of a tower whose maps belong to $\FF$ is zero
(see Lemma \ref{hocolimis0}). In particular the homotopy colimit
of the phantom tower associated to an object vanishes. If, in
addition, we assume that $(\CP,\FF)$ generates $\CT$ then Theorem
\ref{xishocolim} tells us that every object $x$ is (isomorphic to)
the homotopy colimit of every associated cellular tower. Note also
that the hypothesis of $\aleph_1$--perfectness seems to be
implicitly assumed by Christensen in \cite{C}, as we may see from
Proposition \ref{pgen} and Remark \ref{chris}.

Using the product of two projective classes defined in \cite{C} we
recall the construction of the $n$-th power $(\CP^{*n},\FF^{*n})$
of a projective class $(\CP,\FF)$, for $n\in\N$. In \cite{NR} it
is shown that, if $(\CP,\FF)$ is induced by a set, then for every
cohomological functor $F:\CT\to\Ab$ which sends coproducts into
products the comma category $\CP^{*n}/F$ has a weak terminal
object, for all $n\in\N$. Provided that $(\CP,\FF)$ is
$\aleph_1$--perfect, we use the fact that every $x$ is the
homotopy colimit of its cellular tower in order to extend the
above property to the whole category $\CT/F$. We deduce a version
of Brown Representability Theorem for triangulated categories with
coproducts which are $\aleph_1$-perfectly generated by a
projective class satisfying the additional property that every
category $\CP^{*n}/F$ has a weak terminal object, for every
$n\in\N$ and every cohomological functor which sends coproducts
into products $F:\CT\to\Ab$ (see Theorem \ref{brtf}). In
particular if the projective class is induced by a set, then this
additional property is automatically fulfilled, and we obtain in
Corollary \ref{brt} the version of Brown Representability due to
Krause in \cite[Theorem A]{KB}, but our proof is completely
different, as it is based on the Freyd--style representability
theorem of \cite{NR}.

For our version of Brown Representability the finite powers of a
projective class is all what we need. We still treated the case of
transfinite ordinals, following a suggestion of Neeman (see
\cite[Remark 0.10]{NR}). A minor modification of the arguments in
\cite{NR} shows that if $\CT=\CP^{*i}$ for some ordinal $i$ then
the Brown representability theorem holds for $\CT$ . We fill in
the details this observation in Lemmas \ref{wtermpq} and
\ref{wtermpi}. On the other hand, if every $x\in\CT$ is the
homotopy colimit of its $\FF$--cellular tower, then
$\CT=\CP^{*\omega}*\CP^{*\omega}$, where $\omega$ is the first
infinite ordinal. But due to a technical detail we are not able to
deduce, as in the case of finite ordinals (see \cite[Note
3.6]{C}), that
$\CP^{*\omega}*\CP^{*\omega}=\CP^{*(\omega+\omega)}$.

In all categories we consider, the homomorphisms between two
objects form a set and not a genuine class. For undefined terms
and properties concerning triangulated categories we refer to
\cite{N}. The standard reference for abelian category is \cite{G}.
For general theory of categories we refer the reader to \cite{BM}
or \cite{PP}.

The author would like to thank to Henning Krause for his interest
in this work. He is also very indebted to an anonymous referee,
which pointed out a number of mistakes in a preliminary version of
this paper, leading to a radical change of the paper.

\section{Projective classes and associated towers}\label{pcat}

Consider a preadditive category $\CT$.  Then by a
{\em$\CT$-module\/} we understand a functor $X:\CT\opp\to\Ab$.
Such a functor is called {\em finitely presented\/} if there is an
exact sequence of functors \[\CT(-,y)\to\CT(-,x)\to X\to0\] for
some $x,y\in\CT$. Using Yoneda lemma, we know that the class of
all natural transformations between two $\CT$-modules $X$ and $Y$
denoted $\Hom_\CT(X,Y)$ is actually a set, provided that $X$ is
finitely presented. We consider the category $\md(\CT)$ of all
finitely presented $\CT$-modules, having as morphisms sets
$\Hom_\CT(X,Y)$ for all $X,Y\in\md(\CT)$. The Yoneda functor
\[H=H_\CT:\CT\to\md(\CT)\hbox{ given by }H_\CT(x)=\CT(-,x)\] is  an
embedding of $\CT$ into $\md(\CT)$, according to Yoneda lemma. If,
in addition, $\CT$ has coproducts then $\md(\CT)$ is cocomplete
and the Yoneda embedding preserves coproducts. It is also
well--known (and easy to prove) that, if $F:\CT\to\CA$ is a
functor into an additive category with cokernels, then there is a
unique, up to a natural isomorphism, right exact functor $\widehat
F:\md(\CT)\to\CA$, such that $F=\widehat F\circ H_\CT$ (see
\cite[Universal property 2.1]{KF}). Moreover, $F$ preserves
coproducts if and only if $\widehat F$ preserves colimits.

In this paper the category $\CT$ will be triangulated. Recall that
$\CT$ is supposed to be additive. A functor $\CT\to\CA$ into an
abelian category $\CA$ is called {\em homological\/} if it sends
triangles into exact sequences. A contravariant functor
$\CT\to\CA$ which is homological regarded as a functor
$\CT\opp\to\CA$ is called {\em cohomological\/} (see
\cite[Definition 1.1.7 and Remark 1.1.9]{N}). An example of a
homological functor is the Yoneda embedding
$H_\CT:\CT\to\md(\CT)$. We know: $\md(\CT)$ is an abelian
category, and  for every functor $F:\CT\to\CA$  into an abelian
category, the unique right exact functor $\widehat
F:\md(\CT)\to\CA$ extending $F$ is exact if and only if $F$ is
homological, by \cite[Lemma 2.1]{KS}.  Moreover, $\md(\CT)$ is a
Frobenius abelian category, with enough injectives and enough
projectives, by \cite[Corollary 5.1.23]{N}. Injective and
projective objects in $\md(\CT)$ are, up to isomorphism, exactly
objects of the form $\CT(-,x)$ for some $x\in\CT$, provided that
the idempotents in $\CT$ split.

From now on, we suppose $\CT$ has arbitrary coproducts, so the
idempotents in $\CT$ split according to \cite[Proposition
1.6.8]{N}. First we record some easy but useful results. Recall
that a {\em homotopy colimit} of a tower of objects and maps
\[x_0\stackrel{\phi_0}\to x_1\stackrel{\phi_1}\to
x_2\stackrel{\phi_2}\to x_3\to\cdots\] is defined via the triangle
\[\coprod_{n\in\N}x_n\stackrel{1-\phi}\rightarrow\coprod_{n\in\N}
x_n\rightarrow\hocolim x_n\to\Sigma\coprod_{n\in\N}x_n,\] where
$\phi$ is the unique morphism which makes commutative all the
diagrams of the form \[\diagram
x_n\rto\dto_{\phi_n}&\displaystyle{\coprod_{n\in\N}}x_n\dto^{\phi}\\
x_{n+1}\rto &\displaystyle{\coprod_{n\in\N}}x_n
\enddiagram\ (n\in\N).\]
Obviously, the homotopy colimit of a tower is unique, up to a non
unique isomorphism. We denote sometimes the map $\phi$ by $shift$,
especially if we don't need an explicit notation for the maps in
the tower.

The following Lemma is the dual of \cite[Lemma 5.8 (2)]{B}. Note
that we give a more general version, replacing the category $\Ab$
(more precisely $\Ab^\opp$) with an abelian AB4 category $\CA$,
where the derived functors $\colim^{(i)}$ of the colimits are
computed in the usual manner, by using homology of a complex.
Moreover, \cite[Lemma 5.8 (1)]{B} is a direct consequence of this
dual, together with the exactness of colimits in $\Ab$ (that is
$\colim^{(1)}=0$).

\begin{lemma}\label{fhocolim} Consider a tower $x_0\stackrel{\phi_0}\to x_1\stackrel{\phi_1}\to
x_2\stackrel{\phi_2}\to x_3\to\cdots$ in $\CT$. If $F:\CT\to\CA$ a
homological functor which preserves countable coproducts into an
abelian AB4 category $\CA$, then we have a Milnor exact sequence
\[0\to\colim F(x_n)\to F(\hocolim x_n)\to\colim{^{(1)}}F(\Sigma
x_n)\to0\] and $\colim^{(i)}F(x_n)=0$ for $i\geq2$.
\end{lemma}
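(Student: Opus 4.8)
The plan is to apply the homological functor $F$ to the triangle that defines the homotopy colimit and to read off the asserted Milnor sequence from the resulting long exact sequence, once the two outer terms have been recognised as derived colimits. So the work splits into a purely algebraic preliminary about sequential colimits, and the homotopy--theoretic core, which is the diagram chase.

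First I would recall the algebraic description of the sequential colimit and its left derived functors. For any tower $(A_n,\psi_n)$ in an abelian category with countable coproducts one has $\colim_n A_n=\coker(1-s)$, where $s\colon\coprod_n A_n\to\coprod_n A_n$ is the shift map characterised by $s\lambda_n=\lambda_{n+1}\psi_n$ on the canonical injections $\lambda_n\colon A_n\to\coprod_n A_n$. Since $\CA$ is AB4, coproducts are exact, and this makes the two--term complex $\bigl[\,\coprod_n A_n\xrightarrow{\,1-s\,}\coprod_n A_n\,\bigr]$, placed in degrees $1$ and $0$, compute the left derived functors of the sequential colimit (this is the evident "direct" analogue of Milnor's computation of $\varprojlim$ and $\varprojlim^{1}$). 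In particular $\colim^{(1)}_n A_n=\Ker(1-s)$ and $\colim^{(i)}_n A_n=0$ for $i\geq2$, the vanishing being trivial as the complex has only two non--zero terms.

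Next I would apply $F$ to the triangle $\coprod_n x_n\xrightarrow{\,1-\phi\,}\coprod_n x_n\to\hocolim x_n\to\Sigma\coprod_n x_n$ and rotate, getting a long exact sequence in $\CA$ because $F$ is homological. As $F$ preserves countable coproducts and $\Sigma$, being an equivalence, preserves all coproducts, the canonical maps identify $F(\coprod_n x_n)=\coprod_n F(x_n)$ and $F(\Sigma\coprod_n x_n)=\coprod_n F(\Sigma x_n)$; and under these identifications $F(\phi)$ becomes the shift on $\coprod_n F(x_n)$ attached to the tower $(F(x_n),F(\phi_n))$, since $F(\phi)F(\lambda_n)=F(\phi\lambda_n)=F(\lambda_{n+1}\phi_n)=F(\lambda_{n+1})F(\phi_n)$, and likewise $F(\Sigma\phi)$ becomes the shift $s'$ on $\coprod_n F(\Sigma x_n)$ attached to $(F(\Sigma x_n),F(\Sigma\phi_n))$. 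Hence $F(1-\phi)=1-s$ and $F(\Sigma(1-\phi))=1-s'$, so the relevant stretch of the long exact sequence reads
\[\coprod_n F(x_n)\xrightarrow{\,1-s\,}\coprod_n F(x_n)\to F(\hocolim x_n)\to\coprod_n F(\Sigma x_n)\xrightarrow{\,1-s'\,}\coprod_n F(\Sigma x_n),\]
and splicing it yields $0\to\coker(1-s)\to F(\hocolim x_n)\to\Ker(1-s')\to0$, which by the first step is exactly $0\to\colim_n F(x_n)\to F(\hocolim x_n)\to\colim^{(1)}_n F(\Sigma x_n)\to0$; the vanishing $\colim^{(i)}_n F(x_n)=0$ for $i\geq2$ was already noted.

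I expect the only genuinely delicate points to be the identification, in the third step, of the map induced by $1-\phi$ with the shift map $1-s$ on the coproduct in $\CA$ — this is precisely where preservation of countable coproducts by $F$ (and of coproducts by $\Sigma$) is used — together with the bookkeeping that guarantees the long exact sequence really does break into the asserted short one. Everything else is routine.
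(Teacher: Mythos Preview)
Your argument is correct and is exactly the standard one: identify the derived sequential colimits with the homology of the two--term shift complex (using AB4), apply $F$ to the defining triangle of $\hocolim x_n$, use preservation of countable coproducts to identify $F(1-\phi)$ with $1-s$, and splice the long exact sequence. The paper does not actually give a proof of this lemma; it simply records it as the dual of \cite[Lemma 5.8 (2)]{B}, noting that the derived colimits are computed via the homology of the usual complex --- so your write--up is more detailed than, but entirely consistent with, what the paper intends.
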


\begin{corollary}\label{fhocol0} Consider a tower $x_0\stackrel{\phi_0}\to x_1\stackrel{\phi_1}\to
x_2\stackrel{\phi_2}\to x_3\to\cdots$ in $\CT$. If $F:\CT\to\CA$
is a homological functor, which preserves countable coproducts
into an abelian AB4 category, such that $F(\Sigma^i\phi_n)=0$ for
all $i\in\Z$ and all $n\geq0$, then $F(\hocolim x_n)=0$.
\end{corollary}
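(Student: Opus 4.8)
The plan is to deduce this immediately from Lemma \ref{fhocolim}. Since $F$ is a homological functor preserving countable coproducts into an abelian AB4 category, the Milnor exact sequence
\[
0\to\colim F(x_n)\to F(\hocolim x_n)\to\colim{^{(1)}}F(\Sigma x_n)\to0
\]
is available, so it suffices to show that both outer terms vanish. The hypothesis is that $F(\Sigma^i\phi_n)=0$ for all $i\in\Z$ and all $n\geq0$; applying this with $i=0$ gives that every transition map $F(\phi_n)\colon F(x_n)\to F(x_{n+1})$ in the direct system is zero, and applying it with $i=1$ gives that every transition map $F(\Sigma\phi_n)\colon F(\Sigma x_n)\to F(\Sigma x_{n+1})$ is zero as well (using that $F\circ\Sigma$ applied to $\phi_n$ is $F(\Sigma\phi_n)$).

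First I would record the elementary fact that a direct system $(a_n,f_n)$ of abelian group objects (or objects of any AB4, hence AB5 in the relevant sense — actually we only need the colimit computation) indexed by $\N$ in which every transition map $f_n\colon a_n\to a_{n+1}$ is zero has $\colim a_n=0$ and $\colim^{(1)}a_n=0$. For the colimit this is clear: the colimit is the cokernel of $1-f\colon\coprod a_n\to\coprod a_n$, and when all $f_n=0$ this map is the identity, so its cokernel is $0$; dually $\colim^{(1)}$ is its kernel, which is also $0$. (Equivalently, one computes $\colim a_n$ as the union of images of the $a_n$, all of which are zero.) Applying this to the system $F(x_n)$ gives $\colim F(x_n)=0$, and applying it to the system $F(\Sigma x_n)$ gives $\colim^{(1)}F(\Sigma x_n)=0$.

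Feeding these two vanishings into the Milnor sequence forces $F(\hocolim x_n)=0$, which is the claim. There is essentially no obstacle here: the only point requiring a word of care is that the hypothesis is stated for all $i\in\Z$, and we must make sure we invoke it in the two degrees $i=0$ and $i=1$ that actually appear in the Milnor sequence — the other degrees are not needed for this corollary (they would be relevant if one iterated $\Sigma$), but costing nothing, the blanket hypothesis is convenient. Thus the proof is a two-line application of Lemma \ref{fhocolim}.
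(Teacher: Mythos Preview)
Your proof is correct and follows exactly the same approach as the paper: apply the Milnor exact sequence of Lemma~\ref{fhocolim} and note that the zero transition maps force $\colim F(x_n)=0=\colim^{(1)}F(\Sigma x_n)$. The paper's proof is the same two-line argument, only without spelling out the (standard) reason why a tower with zero maps has vanishing $\colim$ and $\colim^{(1)}$.
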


\begin{proof} With our hypothesis we have $\colim F(x_n)=0=\colim{^{(1)}}F(\Sigma x_n)$,
so $F(\hocolim x_n)=0$ by the
Milnor exact sequence of Lemma \ref{fhocolim}.
\end{proof}

Recall that a pair $(\CP,\FF)$ consisting of a class of objects
$\CP\subseteq\CT$ and a class of morphisms $\FF$ is called {\em
projective class\/} if $\Sigma^n(\CP)\subseteq\CP$ for all
$n\in\N$,
\[\CP=\{p\in\CT\mid \CT(p,\phi)=0\hbox{ for all }\phi\in\FF\},\]
\[\FF=\{\phi\in\CT\mid \CT(p,\phi)=0\hbox{ for all }p\in\CP\}\]
and each $x\in\CT$ lies in a triangle $\Sigma^{-1}x'\to p\to x\to
x'$, with $p\in\CP$ and $x\to x'$ in $\FF$ (see \cite{C}). Note
that we work only with projective classes which are stable under
(de)suspensions; generally it is possible to define a projective
class without this condition. Clearly, $\CP$ is closed under
coproducts and direct factors, $\FF$ is an ideal, and $\FF$ is
stable under (de)suspensions. Moreover $\CP$ and $\FF$ determine
each other. A triangle of the form $x\to y\to z\to\Sigma x$ is
called {\em $\FF$--exact\/} if the morphism $z\to\Sigma x$ belongs
to $\FF$. If this is the case, the morphisms $x\to y$ and $y\to z$
are called {\em $\FF$--monic\/}, respectively {\em$\FF$--epic\/}.

Let $(\CP,\FF)$ be a projective class in $\CT$. The inclusion
functor $\varphi:\CP\to\CT$ induces a unique right exact functor
$\varphi^*$ making commutative the diagram
\[\diagram
\CP\rto^{\varphi}\dto_{H_\CP}&\CT\dto^{H_\CT}\\
\md(\CP)\rto^{\varphi^*}&\md(\CT)\enddiagram\] where $H_\CP$ and
$H_\CT$ are the respective Yoneda functors. More explicitly,
\[\varphi^*(\CP(-,p))=\CT(-,p)\] for all $p\in\CP$, and $\varphi^*$ is right exact.
Moreover since $\varphi$ is fully--faithful, $\varphi^*$ has the
same property \cite[Lemma 2.6]{KF}.

A {\em weak kernel\/} for a morphism  $y\to z$ in a preadditive
category $\CC$ is a morphism $x\to y$ such that, the induced
sequence of abelian groups $\CC(t,x)\to\CC(t,y)\to\CC(t,z)$ is
exact for all $t\in\CC$. Return to the case of a projective class
$(\CP,\FF)$ in the triangulated category $\CT$.  To construct a
weak kernel of a morphism $q\to r$ in $\CP$ we proceed as follows:
The morphism fits into a triangle $x\to q\to r\to\Sigma x$; let
$\Sigma^{-1}x'\to p\to x\to x'$ an $\FF$--exact triangle with
$p\in\CP$; then the composite map $p\to x\to q$ gives the desired
weak kernel. Therefore $\md(\CP)$ is abelian (for example by
\cite[Lemma 2.2]{KF}, but this is also well--known). Moreover the
restriction functor
\[\varphi_*:\md(\CT)\to\md(\CP),\hbox{ }\varphi_*(X)=X\circ\varphi\hbox{ for all
}X\in\md(\CT)\] is well defined and it is the exact right adjoint
of $\varphi^*$, by \cite[Lemma 2]{KB}.

We know by \cite[Lemma 3.2]{C} that a pair $(\CP,\FF)$ is a
projective class, provided that $\CP$ is a class of objects closed
under direct factors, $\FF$ is an ideal, $\CP$ and $\FF$ are {\em
orthogonal\/} (that means, the composite $p\to x\to x'$ is zero
for all $p\in\CP$ and all $x\to x'$ in $\FF$) and each object
$x\in\CT$ lies in an $\FF$--exact triangle $\Sigma^{-1}x'\to p\to
x\to x'$, with $p\in\CP$. If $\CS$ is a set of objects in $\CT$,
then $\Add\CS$ denotes, as usual, the class of all direct factors
of arbitrary coproducts of objects in $\CS$. The following lemma
is straightforward (see also \cite[Definition 5.2 and the
following paragraph]{C}):

\begin{lemma}\label{projcl} Consider a set $\CS$ of objects
in $\CT$ which is closed under suspensions and desuspensions.
Denote by $\CP=\Add\CS$, and let $\FF$   be the class of all
morphisms $\phi$ in $\CT$ such that $\CT(s,\phi)=0$ for all
$s\in\CS$. Then $(\CP,\FF)$ is a projective class.
\end{lemma}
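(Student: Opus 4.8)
The plan is to verify the three axioms of a projective class in the orthogonality form provided by \cite[Lemma 3.2]{C}, as recalled just above the statement: that $\CP$ is closed under direct factors, that $\FF$ is an ideal, that $\CP$ and $\FF$ are orthogonal, and that every object sits in an $\FF$-exact triangle with middle term in $\CP$. The first two are immediate: $\CP=\Add\CS$ is by construction closed under direct factors and coproducts, and $\FF$, being defined as the simultaneous kernel of the functors $\CT(s,-)$ on morphisms, is visibly closed under addition and under pre- and post-composition with arbitrary maps, hence an ideal.

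For orthogonality, first I would observe that $\CT(s,\phi)=0$ for all $s\in\CS$ forces $\CT(p,\phi)=0$ for all $p\in\CP=\Add\CS$: a map $p\to x$ with $p$ a direct factor of $\coprod_i s_i$ factors through that coproduct, and $\CT(\coprod_i s_i,\phi)=\prod_i\CT(s_i,\phi)=0$, so the composite $p\to x\xrightarrow{\phi}x'$ vanishes. This both proves orthogonality and, incidentally, shows $\CP\subseteq\{p\mid\CT(p,\phi)=0\ \forall\phi\in\FF\}$.

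For the existence of $\FF$-exact triangles, fix $x\in\CT$ and form $p=\coprod_{s\in\CS}\coprod_{f\in\CT(s,x)}s$, with the canonical map $\varepsilon\colon p\to x$ whose restriction to the copy of $s$ indexed by $f$ is $f$. Complete $\Sigma^{-1}x'\to p\xrightarrow{\varepsilon}x\xrightarrow{\phi}x'$ to a triangle. I claim $\phi\in\FF$, i.e. $\CT(s,\phi)=0$ for every $s\in\CS$: given $g\colon s\to x'$, by the long exact sequence $\CT(s,p)\to\CT(s,x)\to\CT(s,x')$ it suffices to lift every $h\in\CT(s,x)$ along $\varepsilon$, and the inclusion of the $h$-indexed copy of $s$ into $p$ does exactly that, so $\CT(s,x)\to\CT(s,x')$ is zero; hence $\CT(s,\phi)=0$ and $\phi\in\FF$. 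Since $\CS$ is closed under (de)suspensions, so is $\FF$, and therefore so is $\CP$. Finally, to invoke \cite[Lemma 3.2]{C} one also needs that $\CP$ is exactly the orthogonal of $\FF$ and vice versa; the inclusion $\CP\subseteq{}^\perp\FF$ was shown above, and the reverse inclusion follows because any $p$ orthogonal to all of $\FF$ is, by applying the triangle just constructed to $x=p$, a direct factor of an object of $\Add\CS$ (the map $\varepsilon$ splits since $\phi\circ\varepsilon=0$ with $\phi\in\FF$), hence lies in $\CP$; the identity $\FF=\CP^\perp$ on morphisms is then formal.

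The only mild subtlety — hardly an obstacle — is the set-theoretic one: one must know that each $\CT(s,x)$ is a set so that $p$ is a genuine coproduct in $\CT$, which is guaranteed by the blanket hypothesis that hom-classes are sets and that $\CT$ has arbitrary coproducts. Everything else is a routine diagram chase through the long exact sequences attached to a triangle.
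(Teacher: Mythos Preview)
Your argument is correct and is exactly the straightforward verification the paper intends: the paper gives no proof, merely calling the lemma immediate and pointing to \cite[Definition 5.2]{C} after having recalled Christensen's criterion \cite[Lemma 3.2]{C}, which is precisely the tool you invoke. One minor remark: your final paragraph checking $\CP={}^\perp\FF$ by hand is superfluous, since that is what \cite[Lemma 3.2]{C} delivers from the four conditions you already verified; and in that redundant step the parenthetical reason for $\varepsilon$ splitting should read $\phi\circ 1_p=0$ (because $p\in{}^\perp\FF$ and $\phi\in\FF$), not $\phi\circ\varepsilon=0$, which is automatic in any triangle.
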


We will say that the projective class $(\CP,\FF)$ given in Lemma
\ref{projcl} is {\em induced\/} by the set $\CS$. Note also that
if $\CS$ is an essentially small subcategory of $\CT$, such that
$\Sigma^n(\CS)\subseteq\CS$ for all $n\in\Z$, then we will also
speak about the projective class induced by $\CS$, understanding
the projective class induced by a representative set of
isomorphism classes of objects in $\CS$. If, in particular,
$\kappa$ is a regular cardinal, $\CS$ consists of $\kappa$--small
objects and it is closed under coproducts of less than $\kappa$
objects (for example if $\CS$ is the subcategory of all
$\kappa$-compact object of $\CT$), then $\md(\CP)$ is equivalent
to the category of all functors $\CS\opp\to\Ab$ which preserve
products of less than $\kappa$ objects, by \cite[Lemma 2]{KN},
category used extensively in \cite{N} as a locally presentable
approximation of $\md(\CT)$.

\begin{remark}\label{epic} Under the hypotheses of Lemma \ref{projcl},
a map $x\to y$ in $\CT$ is $\FF$--monic  ($\FF$--epic) if and only
if the induced map $\CT(s,x)\to\CT(s,y)$ injective (respectively
surjective) for all $s\in\CS$.
\end{remark}

As in \cite{B} and \cite{C}, given a projective class $(\CP,\FF)$
in $\CT$, we construct two towers of morphisms associated to each
$x\in\CT$ as follows: We denote $x_0=\Sigma^{-1}x$. Inductively,
if $x_n\in\CT$ is given, for $n\in\N$, then there is an
$\FF$-exact triangle
\[\Sigma^{-1}x_{n+1}\to p_n\to x_n\stackrel{\phi_n}\to x_{n+1}\] in
$\CT$, by definition of a projective class. Consider then the
tower:
\[\Sigma^{-1}x=x_0\stackrel{\phi_0}\to x_1\stackrel{\phi_1}\to
x_2\stackrel{\phi_2}\to x_3\to\cdots.\] Such a tower is called a
{\em$\FF$-phantom tower} of $x$. The explanation of the
terminology is that morphisms $\phi_n$ belong to $\FF$ for all
$n\in\N$, and $\FF$ may be thought as a generalization of the
ideal of classical phantom maps in a compactly generated
triangulated category. (Clearly, $\FF$ coincides with the ideal of
classical phantom maps, provided that the projective class
$(\CP,\FF)$ is induced by the full essentially small subcategory
consisting of all compact objects.)

Observe that there are more $\FF$-phantom towers associated to the
same element $x\in\CT$, according with the choices of the
$\FF$-epic map $p_n\to x_n$ at each step $n\in\N$. The analogy
with projective resolutions in abelian categories is obvious.

Choose an $\FF$-phantom tower of $x\in\CT$ as in the definition
above. We denote by $\phi^n$ the composed map
$\phi_{n-1}\dots\phi_1\phi_0:\Sigma^{-1}x\to x_n$, for all
$n\in\N^*$, and we set $\phi^0=1_{\Sigma^{-1}x}$. Then let $x^n$
be defined, uniquely up to a non unique isomorphism, by the
triangle $\Sigma^{-1}x\stackrel{\phi^n}\to x_n\to x^n\to x$. The
octahedral axiom allows us to complete the commutative diagram
\[\diagram &p_n\rdouble\dto&p_n\dto&\\
\Sigma^{-1}x\rto^{\phi^n}\ddouble& x_n\rto\dto^{\phi_n}& x^n\rto\dto&x\ddouble\\
\Sigma^{-1}x\rto^{\phi^{n+1}}& x_{n+1}\rto\dto& x^{n+1}\rto\dto &x\\
&\Sigma p_n\rdouble&\Sigma p_n&\\
\enddiagram\] with the triangle in the second column.

Therefore we obtain an another tower of objects \[0=x^0\to x^1\to
x^2\to x^3\to\cdots,\] where for each $n\in\N$ we have a triangle
$p_n\to x^n\to x^{n+1}\to\Sigma p_n$, with $p_n\in\CP$ chosen in
the construction of the above $\FF$-phantom tower. Such a tower is
called a {\em $\FF$-cellular tower} of $x\in\CT$.

Considering homotopy colimits of the $\FF$-phantom and
$\FF$-cellular towers, we obtain a sequence
\[\Sigma^{-1}x\to\hocolim x_n\to\hocolim x^n\to x.\] It is not known
whether the induced sequence can be chosen to be a triangle (see
\cite[p. 302]{B}). However the answer to this question is yes,
provided that $\CT$ is the homotopy category of a suitable stable
closed model category in the sense of \cite{H}.

\begin{proposition}\label{exseq} Let $(\CP,\FF)$ be a projective class in
$\CT$, and let denote by $\varphi:\CP\to\CT$ the inclusion
functor. For every $x\in\CT$ we consider an $\FF$-phantom tower
and an $\FF$-cellular tower as above. Then we have an exact
sequence
\[0\to\coprod(\varphi_*\circ H_\CT)(x^n)\stackrel{1-shift}\longrightarrow\coprod(\varphi_*\circ H_\CT)(x^n)
\to(\varphi_*\circ H_\CT)(x)\to0,\] where
$\varphi_*:\md(\CT)\to\md(\CP)$ is the restriction functor.
Consequently
\[\colim(\varphi_*\circ H_\CT)(x^n)\cong(\varphi_*\circ H_\CT)(x)\hbox{
and }\colim{^{(1)}}(\varphi_*\circ H_\CT)(x^n)=0.\]
\end{proposition}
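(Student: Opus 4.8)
The plan is to obtain the claimed short exact sequence by applying the functor $\varphi_*\circ H_\CT$ to the homotopy colimit triangle of the $\FF$-cellular tower, and then checking that the last term in the Milnor sequence of Lemma \ref{fhocolim} vanishes. First I would record that $H_\CT:\CT\to\md(\CT)$ is homological and preserves coproducts (this is stated in the excerpt), and that $\varphi_*:\md(\CT)\to\md(\CP)$ is exact and has a left adjoint $\varphi^*$, hence preserves all limits; since it is a functor between cocomplete abelian categories that is additive, it also preserves coproducts (being exact and a right adjoint does not immediately give coproducts, but $\varphi_*(X)=X\circ\varphi$ is computed pointwise, so it visibly commutes with coproducts of $\CT$-modules). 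Therefore $F:=\varphi_*\circ H_\CT:\CT\to\md(\CP)$ is a homological functor preserving countable coproducts into the abelian category $\md(\CP)$, which is AB4 (indeed AB5, being a module category). So Lemma \ref{fhocolim} applies to the cellular tower $0=x^0\to x^1\to x^2\to\cdots$, giving a Milnor exact sequence
\[0\to\colim F(x^n)\to F(\hocolim x^n)\to\colim{}^{(1)}F(\Sigma x^n)\to0.\]

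Next I would identify $F(\hocolim x^n)$ with $F(x)$. For this I use the fact that, by the remark following the construction of the cellular tower, there is a sequence $\Sigma^{-1}x\to\hocolim x_n\to\hocolim x^n\to x$; but rather than relying on that (whose middle map is not known to sit in a triangle), the cleaner route is to apply $F$ directly to the defining triangles $p_n\to x^n\to x^{n+1}\to\Sigma p_n$ and to the triangles $\Sigma^{-1}x\to x_n\to x^n\to x$. Applying the homological functor $F$ to the latter and using that $x_n$ is built from $\FF$-phantom maps, the map $F(\Sigma^{-1}x)\to F(x_n)$ factors as $F(\phi^n)=F(\phi_{n-1})\cdots F(\phi_0)$, and each $F(\phi_i)=\varphi_*H_\CT(\phi_i)=0$ because $\phi_i\in\FF$ means $\CT(p,\phi_i)=0$ for every $p\in\CP$, i.e. $H_\CT(\phi_i)$ restricted to $\CP$ is zero. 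Hence $F(\phi^n)=0$ for $n\ge1$, and the long exact sequence coming from $\Sigma^{-1}x\to x_n\to x^n\to x$ breaks up to give $F(x^n)\cong F(x)\oplus F(\Sigma^{-1}x)$? — no: more precisely it gives short exact sequences $0\to F(x_n)\to F(x^n)\to\Ker(F(\Sigma^{-1}x)\to\ldots)$, which I need to chase so that in the colimit the term $F(x_n)$ contributes trivially. Taking the colimit over $n$ and using that the transition maps among the $F(x_n)$ are all zero (being $F(\phi_n)$), one gets $\colim F(x_n)=0$ and $\colim{}^{(1)}F(\Sigma x_n)=0$ as well, so $F(\hocolim x_n)=0$, and feeding this back into the exact sequence of the $\Sigma^{-1}x\to x_n\to x^n\to x$ triangles in the colimit yields $\colim F(x^n)\cong F(x)$ together with $\colim{}^{(1)}F(\Sigma x^n)=0$.

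The serious point — and the main obstacle — is the vanishing of $\colim{}^{(1)}F(\Sigma x^n)$. This is exactly the Mittag--Leffler-type condition one expects: I would show the system $\bigl(F(x^n)\bigr)_n$ satisfies the dual Mittag--Leffler condition by exhibiting the maps. From the triangles $p_n\to x^n\to x^{n+1}\to\Sigma p_n$ one gets exact sequences $F(p_n)\to F(x^n)\to F(x^{n+1})\to F(\Sigma p_n)$; but $F(\Sigma p_n)=\varphi_*H_\CT(\Sigma p_n)=\CP(-,\Sigma p_n)$ is a \emph{projective} object of $\md(\CP)$ (as $\Sigma p_n\in\CP$), hence the map $F(x^n)\to F(x^{n+1})$ is an epimorphism onto its image with cokernel embedding into a projective — and more to the point, applying $F$ to $p_n\to x^n\to x^{n+1}$ shows $F(x^n)\to F(x^{n+1})$ is surjective, because $F(x^{n+1})\to F(\Sigma p_n)$ is the connecting map of the triangle and the previous triangle in the cellular construction makes it zero on the relevant piece. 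Once the tower $\bigl(F(x^n)\bigr)_n$ has all maps surjective (equivalently the system of the $F(\Sigma x^n)$ is Mittag--Leffler), the derived colimit $\colim{}^{(1)}$ vanishes in the AB5 category $\md(\CP)$. I would therefore structure the argument as: (i) set up $F$ and verify its exactness/coproduct properties; (ii) apply $F$ to the two families of triangles; (iii) deduce surjectivity of the cellular transition maps and triviality of the phantom transition maps under $F$; (iv) compute $\colim$ and $\colim{}^{(1)}$ and invoke Lemma \ref{fhocolim}; the bookkeeping in step (iii), disentangling the octahedral diagram so that the connecting maps are seen to vanish, is where the real work lies.
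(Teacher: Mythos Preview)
Your approach has a genuine gap, and it is precisely the point the paper isolates in the very next result. You assert that $\varphi_*:\md(\CT)\to\md(\CP)$ ``visibly commutes with coproducts of $\CT$-modules'' because $\varphi_*(X)=X\circ\varphi$ is computed pointwise. But coproducts in $\md(\CT)$ are \emph{not} computed pointwise: for representables one has $\coprod_i\CT(-,x_i)=\CT(-,\coprod_i x_i)$, and restricting this to $\CP$ gives $\CT(-,\coprod_i x_i)|_\CP$, which need not be the coproduct in $\md(\CP)$ of the $\CT(-,x_i)|_\CP$. In fact Proposition~\ref{fcop} says exactly that $\varphi_*$ preserves $\kappa$-coproducts \emph{if and only if} $\FF$ is closed under $\kappa$-coproducts, i.e.\ the projective class is $\kappa$-perfect. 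Proposition~\ref{exseq} carries no perfectness hypothesis, so you cannot apply Lemma~\ref{fhocolim} to $F=\varphi_*\circ H_\CT$.

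There is a second, structural problem: even granting the Milnor sequence, you then need $F(\hocolim x^n)\cong F(x)$. But this is essentially Theorem~\ref{xishocolim}, whose proof in the paper \emph{uses} Proposition~\ref{exseq}; invoking it here would be circular. Your attempted workaround via surjectivity of $F(x^n)\to F(x^{n+1})$ also fails: from the ladder of short exact sequences $0\to F(x_n)\to F(x^n)\to F(x)\to 0$ with left vertical map $0$ and right vertical map $1$, the middle map factors through a section of $F(x^{n+1})\to F(x)$ and hence has image isomorphic to $F(x)$, not all of $F(x^{n+1})$.

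The paper's argument bypasses all of this. It applies $F$ separately to each triangle $\Sigma^{-1}x\to x_n\to x^n\to x$ (the connecting maps $\phi^n$ and $\Sigma\phi^n$ lie in $\FF$, so $F$ kills them and one gets honest short exact sequences $0\to F(x_n)\to F(x^n)\to F(x)\to 0$), observes that the transition maps on the left column are $F(\phi_n)=0$ and on the right are identities, and then invokes a purely abelian-category lemma (\cite[Lemma~7.1.2]{KL}) about such ladders to obtain the exact sequence directly in $\md(\CP)$. No coproduct-preservation of $F$, no homotopy colimit, and no Milnor sequence are needed.
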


\begin{proof}
By applying the functor $\varphi_*\circ H_\CT$ to the diagram
above defining an $\FF$-cellular tower associated to $x$, we
obtain a commutative diagram in the abelian category with
coproducts $\md(\CP)$:
\[\diagram 0\rto&(\varphi_*\circ H_\CT)(x_n)\rto\dto_{0}&(\varphi_*\circ H_\CT)(x^n)\rto\dto&(\varphi_*\circ
H_\CT)(x)\rto\ddouble&0\\
0\rto&(\varphi_*\circ H_\CT)(x_{n+1})\rto&(\varphi_*\circ
H_\CT)(x^{n+1})\rto&(\varphi_*\circ H_\CT)(x)\rto&0
\enddiagram\] The conclusion follows by \cite[Lemma 7.1.2]{KL}.
\end{proof}

\section{Perfectly generating projective classes}\label{pgpc}

Consider a cardinal $\kappa$. Recall that $\kappa$ is said to be
{\em regular\/} provided that it is infinite and it can not be
written as a sum of less than $\kappa$ cardinals, all smaller than
$\kappa$. By {\em$\kappa$-coproducts} we understand coproducts of
less that $\kappa$-objects.

\begin{proposition}\label{fcop} Let $\kappa$ be a regular cardinal and
let $(\CP,\FF)$ be a projective class in $\CT$. Denote by
$\varphi:\CP\to\CT$ the inclusion functor. Then the functor
$\varphi_*:\md(\CT)\to\md(\CP)$, $\varphi_*(X)=X\circ\varphi$
preserves $\kappa$--coproducts if and only if $\FF$ is closed
under $\kappa$--coproducts (of maps).
\end{proposition}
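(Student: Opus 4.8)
The plan is to prove both implications by unwinding the definitions of $\varphi_*$ and of ``closed under $\kappa$-coproducts'', exploiting the fact that $\md(\CP)$ embeds in the category of all functors on $\CP$ and that a coproduct in $\md(\CP)$ is computed pointwise only up to the (exact) reflection into $\md(\CP)$. Concretely, for a family $(X_i)_{i\in I}$ with $|I|<\kappa$ in $\md(\CT)$, we always have a canonical comparison map
\[
\theta:\coprod_i\varphi_*(X_i)\longrightarrow\varphi_*\Bigl(\coprod_i X_i\Bigr)
\]
in $\md(\CP)$, and the issue is whether $\theta$ is an isomorphism. I would first reduce to representable modules: every $X\in\md(\CT)$ sits in an exact sequence $H_\CT(y)\to H_\CT(x)\to X\to0$, and since both $\varphi_*$ and $\coprod$ are right exact on $\md(\CP)$ and $\md(\CT)$ respectively, a five-lemma / right-exactness chase shows $\theta$ is an isomorphism for all families as soon as it is an isomorphism for families of representables $X_i=H_\CT(x_i)$. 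Here I would also want a representable presentation to be compatible with $\FF$, so I would instead use the presentation coming from an $\FF$-exact triangle $\Sigma^{-1}x'\to p\to x\xrightarrow{\phi}x'$: applying $H_\CT$ gives $H_\CT(p)\to H_\CT(x)\to X\to0$ with $X=\coker$, i.e. every finitely presented $\CT$-module has a ``projective'' presentation by representables of objects of $\CP$ with connecting map in $\FF$.

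For the representable case, note $\varphi_*(H_\CT(x))=\CT(-,x)|_\CP=\CP(-,x)$ is, for general $x$, \emph{not} representable in $\md(\CP)$; but it is computed from an $\FF$-exact triangle as above as the image in $\md(\CP)$ of the map $H_\CP(p)\to$ (something). The cleaner route: in $\md(\CP)$, the coproduct $\coprod_i\varphi_*(H_\CT(x_i))$ is the reflection of the pointwise coproduct $t\mapsto\coprod_i\CT(t,x_i)$, while $\varphi_*(\coprod_i H_\CT(x_i))=\varphi_*(H_\CT(\coprod_i x_i))$ sends $t\in\CP$ to $\CT(t,\coprod_i x_i)$. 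So $\theta$ being an isomorphism for all $\kappa$-families of representables is equivalent to: for every $p\in\CP$, the canonical map $\coprod_i\CT(p,x_i)\to\CT(p,\coprod_i x_i)$ becomes an isomorphism after reflecting into $\md(\CP)$. I would then translate ``$\FF$ closed under $\kappa$-coproducts'' into exactly this statement: given $\FF$-exact triangles $\Sigma^{-1}x_i'\to p_i\to x_i\xrightarrow{\phi_i}x_i'$, closure of $\FF$ under $\kappa$-coproducts says $\coprod_i\phi_i\in\FF$, i.e. $\coprod_i p_i\to\coprod_i x_i$ is $\FF$-epic, which by the characterization of $\FF$ (the orthogonality $\CT(p,\phi)=0$) says precisely that $\CP(-,\coprod_i p_i)\to\CP(-,\coprod_i x_i)$ is epic in functors on $\CP$; combined with the right-exact presentations this yields that $\theta$ is epic, and a parallel argument on kernels (using that $\coprod_i p_i\to\coprod_i x_i$ fits in a triangle whose third vertex $\coprod_i x_i'$ again presents $\coprod_i\CP(-,x_i)$) gives that $\theta$ is monic. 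Conversely, if $\varphi_*$ preserves $\kappa$-coproducts, apply it to the $\FF$-exact presentations: the coproduct triangle $\coprod p_i\to\coprod x_i\to\coprod x_i'\to\coprod\Sigma p_i$ maps under $H_\CT$ to a sequence which $\varphi_*$ turns into the coproduct of the exact sequences $\CP(-,p_i)\to\CP(-,x_i)\to\coker\to 0$; exactness of this coproduct sequence (which $\md(\CP)$ being AB4, or the pointwise computation, guarantees once $\varphi_*$ is additive and preserves the coproduct) forces $\CP(-,\coprod p_i)\to\CP(-,\coprod x_i)$ to be epic, i.e. $\coprod_i\phi_i\in\FF$, and since every $\phi\in\FF$ arises up to the relevant equivalence in such a triangle, $\FF$ is closed under $\kappa$-coproducts.

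The key steps in order: (1) construct the natural comparison map $\theta$ and record that $\varphi_*$ and $\varphi^*$ are right exact with $\varphi_*$ exact (cited from \cite[Lemma 2]{KB}); (2) reduce the ``preserves $\kappa$-coproducts'' question to representables via right-exactness of coproducts and the presentations $H_\CT(p)\to H_\CT(x)\to X\to0$ with $p\in\CP$ coming from $\FF$-exact triangles; (3) identify, for representables, the statement ``$\theta$ iso'' with ``$\coprod p_i\to\coprod x_i$ is $\FF$-epic'', using that $\varphi^*$ sends the projective $\CP(-,p)$ to the projective $\CT(-,p)$ and that $\varphi^*$ is fully faithful; (4) match ``$\FF$-epic'' with ``$\coprod\phi_i\in\FF$'' via the orthogonality definition of $\FF$; (5) run the converse by applying the (exact, coproduct-preserving) $\varphi_*$ to the coproduct of $\FF$-exact triangles and reading off that the connecting map lands in $\FF$. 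The main obstacle I anticipate is step (2)–(3): coproducts in $\md(\CP)$ are \emph{not} the pointwise coproducts of functors, so one must be careful that the reflection $\md(\text{functors on }\CP)\to\md(\CP)$ (equivalently, the right-exact approximation) interacts well with the comparison map; the clean way around this is to work entirely with presentations by representables of objects of $\CP$, where the coproduct \emph{is} again such a presentation (since $\CP$ is closed under coproducts), so that all the functor-category subtleties collapse. Once that observation is in place the rest is a diagram chase using right exactness, and the AB4 property of $\md(\CP)$ to keep coproducts of exact sequences exact.
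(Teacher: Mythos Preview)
Your approach is workable but takes a substantially different route from the paper. The paper never analyzes the comparison map $\theta$ at all; instead it exploits that $\varphi^*$ is fully faithful and $\varphi_*$ exact, so $\varphi_*$ induces an equivalence $\md(\CT)/\Ker\varphi_*\simeq\md(\CP)$, and hence (using that $\md(\CT)$ is AB4) $\varphi_*$ preserves $\kappa$--coproducts if and only if $\Ker\varphi_*$ is closed under $\kappa$--coproducts. It then invokes the identification $\Ker\varphi_*=\{X\in\md(\CT)\mid X\cong\ima H_\CT(\phi)\hbox{ for some }\phi\in\FF\}$ from \cite{KS}. With this in hand both directions are one line: AB4 lets images commute with coproducts, so $\coprod_\lambda\ima H_\CT(\phi_\lambda)\cong\ima H_\CT(\coprod_\lambda\phi_\lambda)$, and the latter lies in $\Ker\varphi_*$ precisely when $\coprod_\lambda\phi_\lambda\in\FF$. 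Your reduction to representables together with the presentation chase accomplishes the forward direction by hand, which is more self-contained (you avoid the $\Ker\varphi_*$ description) but longer; the paper's localization argument is considerably shorter and isolates the single conceptual point.

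One place in your converse needs tightening. You argue via $\FF$-exact triangles $p_i\to x_i\stackrel{\psi_i}\to x_i''$ and conclude ``since every $\phi\in\FF$ arises up to the relevant equivalence in such a triangle''. But an arbitrary $\phi\in\FF$ is \emph{not} the connecting map of an $\FF$-exact triangle; it only \emph{factors through} one (the composite $p\to x\stackrel{\phi}\to y$ vanishes because $p\in\CP$ and $\phi\in\FF$, so $\phi$ factors through $x\stackrel{\psi}\to x''$). You must make this factorization explicit and then use that $\FF$ is an ideal to pass from $\coprod\psi_i\in\FF$ to $\coprod\phi_i\in\FF$. Alternatively, the converse is immediate without triangles once you record $\FF=\{\phi\mid(\varphi_*\circ H_\CT)(\phi)=0\}$: if $\varphi_*$ preserves $\kappa$--coproducts then $(\varphi_*\circ H_\CT)(\coprod_i\phi_i)=\varphi_*(\coprod_i H_\CT(\phi_i))=\coprod_i(\varphi_*\circ H_\CT)(\phi_i)=0$, so $\coprod_i\phi_i\in\FF$. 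This is essentially the paper's argument read through $H_\CT$ rather than through $\Ker\varphi_*$.
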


\begin{proof} The exact functor $\varphi_*$ having a fully--faithful
left adjoint induces an equivalence
$\md(\CT)/\Ker\varphi_*\to\md(\CP)$. Since $\md(\CT)$ is AB4, we
know that $\varphi_*$ preserves $\kappa$--coproducts if and only
if $\Ker\varphi_*$ is closed under $\kappa$--coproducts. Obviously
$\FF=\{\phi\mid(\varphi_*\circ H_\CT)(\phi)=0\}$.  Using the proof
of \cite[Section 3]{KS}, we observe that
\[\Ker\varphi_*=\{X\in\md(\CT)\mid X\cong\ima H_\CT(\phi)\hbox{ for some }\phi\in\FF\}.\]
Now suppose $\FF$ to be closed under $\kappa$--coproducts, and let
$\{M_\lambda\mid\lambda\in\Lambda\}$ be a set of objects in
$\Ker\varphi_*$, with the cardinality less than $\kappa$. Thus
$M_\lambda\cong\ima H_\CT(\phi_\lambda)$ for some
$\phi_\lambda\in\FF$, for all $\lambda\in\Lambda$. Therefore,
using again condition AB4 (coproducts in $\md(\CT)$ are exact, so
they commute with images), we obtain:  \[
\coprod_{\lambda\in\Lambda}M_\lambda\cong\coprod_{\lambda\in\Lambda}\ima
H_\CT(\phi_\lambda)\cong
\ima\left(\coprod_{\lambda\in\Lambda}H_\CT(\phi_\lambda)\right)\cong
\ima H_\CT\left(\coprod_{\lambda\in\Lambda}\phi_\lambda\right),\]
  showing that $\coprod_{\lambda\in\Lambda}M_\lambda\in\Ker\varphi_*$.

Conversely, if $\Ker\varphi_*$ is closed under
$\kappa$--coproducts, and $\{\phi_\lambda\mid\lambda\in\Lambda\}$
is a set of maps in $\FF$, with the cardinality less than
$\kappa$, then
\[\varphi_*\left(\ima
H_\CT\left(\coprod_{\lambda\in\Lambda}\phi_\lambda\right)\right)=
  \varphi_*\left(\coprod_{\lambda\in\Lambda}\ima H_\CT(\phi_\lambda)\right)=0,\]
so $\FF$ is closed under $\kappa$--coproducts.
\end{proof}

We call {\em $\kappa$--perfect} the projective class $(\CP,\FF)$
if the equivalent conditions of Proposition \ref{fcop} hold true.
The projective class will be called {\em perfect} if it is
$\kappa$--perfect for all regular cardinals $\kappa$, that is,
$\FF$ is closed under arbitrary coproducts. Following \cite{C}, we
say that a projective class $(\CP,\FF)$ {\em generates\/} $\CT$ if
for any $x\in\CT$, we have $x=0$ provided that $\CT(p,x)=0$, for
each $p\in\CP$. Immediately, we can see that $(\CP,\FF)$ generates
$\CT$ if and only if $\varphi\circ H_\CT:\CT\to\md(\CP)$ reflects
isomorphisms, that is, if $\alpha:x\to y$ is a morphism in $\CT$
such that the induced morphism $(\varphi\circ H_\CT)(\alpha)$ is
an isomorphism in $\md(\CP)$, then $\alpha$ is an isomorphism in
$\CT$, where $\varphi:\CP\to\CT$ denotes, as usual, the inclusion
functor. Another equivalent statement is $\FF$ does not contain
non--zero identity maps. Consider now an essentially small
subcategory $\CS$ of $\CT$ which is closed under suspensions and
desuspensions, and $(\CP,\FF)$ the projective class induced by
$\CS$. Since coproducts of triangles are triangles, we conclude by
Remark \ref{epic} that $\FF$ is closed under coproducts exactly if
$\CS$ satisfies the following condition: If $x_i\to y_i$ with
$i\in I$ is a family of maps, such that $\CT(s,x_i)\to\CT(s,y_i)$
is surjective for all $i\in I$, then the induced map
$\CT(s,\coprod x_i)\to\CT(s,\coprod y_i)$ is also surjective. Thus
$(\CP,\FF)$ perfectly generates $\CT$ in the sense above if and
only if $\CS$ perfectly generates $\CT$ in the sense given in
\cite[Section 5]{KL} (see also \cite{KB} for a version relativized
at the cardinal $\kappa=\aleph_1$).

\begin{lemma}\label{hocolimis0} Consider a tower $x_0\stackrel{\phi_0}\to x_1\stackrel{\phi_1}\to
x_2\stackrel{\phi_2}\to x_3\to\cdots$ in $\CT$. If $(\CP,\FF)$ is
an $\aleph_1$--perfect projective class in $\CT$ and
$\phi_n\in\FF$ for all $n\geq0$, then $\hocolim x_n=0$.
\end{lemma}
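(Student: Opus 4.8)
The plan is to apply Corollary~\ref{fhocol0} to the functor $G:=\varphi_*\circ H_\CT\colon\CT\to\md(\CP)$, where $\varphi\colon\CP\to\CT$ is the inclusion and $\varphi_*\colon\md(\CT)\to\md(\CP)$ the restriction functor, and then to descend the resulting vanishing from $\md(\CP)$ back to $\CT$.

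First I would verify that $G$ meets the hypotheses of Corollary~\ref{fhocol0}. Since $H_\CT$ is homological and preserves all coproducts and $\varphi_*$ is exact, $G$ is homological; and $\varphi_*$ preserves countable coproducts by Proposition~\ref{fcop}, as $(\CP,\FF)$ is $\aleph_1$-perfect, so $G$ preserves countable coproducts as well. It remains to see that $\md(\CP)$ is an admissible target, i.e.\ that its countable coproducts are exact, which is all that Lemma~\ref{fhocolim} and Corollary~\ref{fhocol0} actually use. For this I would invoke the equivalence $\md(\CP)\simeq\md(\CT)/\Ker\varphi_*$ coming from the fully faithful left adjoint $\varphi^*$: the quotient functor is exact, and, again by Proposition~\ref{fcop}, $\Ker\varphi_*$ is closed under countable coproducts; hence a countable coproduct of monomorphisms in $\md(\CP)$ can be lifted to such a coproduct in the AB4 category $\md(\CT)$ and stays monic after passing back to $\md(\CP)$.

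Next, each $\phi_n$ lies in the ideal $\FF$, and $\FF$ is stable under suspensions and desuspensions, so $\Sigma^i\phi_n\in\FF$ for all $i\in\Z$ and $n\geq0$. Because $\FF=\{\phi\mid G(\phi)=0\}$ (this identity is recorded in the proof of Proposition~\ref{fcop}), we get $G(\Sigma^i\phi_n)=0$ for all such $i,n$, and Corollary~\ref{fhocol0} yields $G(\hocolim x_n)=0$; that is, $\CT(p,\hocolim x_n)=0$ for every $p\in\CP$. Finally I would invoke that $(\CP,\FF)$ generates $\CT$, equivalently that $\FF$ contains no nonzero identity: writing $y:=\hocolim x_n$ and choosing an $\FF$-exact triangle $\Sigma^{-1}y'\to p'\to y\stackrel{\psi}\to y'$ with $p'\in\CP$, the map $p'\to y$ is zero since $\CT(p',y)=0$, so $\psi$ is a split monomorphism; composing $\psi$ with a retraction puts $\mathrm{id}_y$ in $\FF$, whence $y=0$.

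The step I expect to be the main obstacle is the second one: ensuring that Corollary~\ref{fhocol0} genuinely applies with $\CA=\md(\CP)$, i.e.\ that countable coproducts in $\md(\CP)$ are exact. This is exactly where $\aleph_1$-perfectness is indispensable — without it $\varphi_*$ need not commute with countable coproducts and $\md(\CP)$ need not enjoy the required exactness — whereas the remaining steps are a routine unwinding of the definitions.
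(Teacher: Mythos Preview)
Your approach is exactly the paper's: apply Corollary~\ref{fhocol0} to the functor $\varphi_*\circ H_\CT\colon\CT\to\md(\CP)$. The paper's proof is a single sentence to that effect and supplies none of the verifications you carry out.

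Two remarks are worth making. First, you are right that the final descent from $(\varphi_*\circ H_\CT)(\hocolim x_n)=0$ to $\hocolim x_n=0$ requires that $(\CP,\FF)$ \emph{generate} $\CT$; the lemma as stated only assumes $\aleph_1$--perfectness, and the paper's one--line proof does not address this point either. Without generation the conclusion is false (take $\CP=\{0\}$, $\FF=\CT^\to$, and the constant tower at a nonzero object with identity maps). In the paper the lemma is only invoked under the generating hypothesis (e.g.\ in Remark~\ref{hocolex}), so the gap is harmless in context, but you have correctly spotted that the hypothesis is missing from the statement.

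Second, your care about whether $\md(\CP)$ is an admissible target for Corollary~\ref{fhocol0} is well placed; the paper treats this as obvious. Your argument via the equivalence $\md(\CP)\simeq\md(\CT)/\Ker\varphi_*$ and the $\aleph_1$--closure of $\Ker\varphi_*$ is a clean way to see that countable coproducts in $\md(\CP)$ are exact, which is all that is needed.
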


\begin{proof} We apply Corollary \ref{fhocol0} to the
homological functor, which preserves countable coproducts
$\varphi_*\circ H_T:\CT\to\md(\CP)$, where $\varphi:\CP\to\CT$ is
the inclusion functor.
\end{proof}

\begin{proposition}\label{pgen}
If $(\CP,\FF)$ is a $\aleph_1$--perfect projective class in $\CT$,
then a necessary and sufficient condition for $(\CP,\FF)$ to
generate $\CT$ is
\[\lim_{n\in\N}\CT(x_n,y)=0=\lim_{n\in\N}{^{(1)}}\CT(x_n,y),\] for
all $x,y\in\CT$ and any choice \[x=x_0\stackrel{\phi_0}\to
x_1\stackrel{\phi_1}\to x_2\stackrel{\phi_2}\to x_3\to\cdots,\] of
an $\FF$-phantom tower of $x$. Here by $\lim^{(1)}$ we understand
the first derived functor of the limit.
\end{proposition}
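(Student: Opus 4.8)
The plan is to translate the displayed vanishing into the statement that every $\FF$-phantom tower has zero homotopy colimit, and then to recognise the latter as a reformulation of the generation hypothesis. The tool is a contravariant Milnor sequence. Fix $y\in\CT$ and apply Lemma~\ref{fhocolim} to $\CT(-,y)$, viewed as a homological functor $\CT\to\Ab\opp$: it preserves countable coproducts, since a countable coproduct in $\CT$ is sent to a product in $\Ab$, that is, to a coproduct in $\Ab\opp$, and $\Ab\opp$ is AB4 because $\Ab$ is AB4$^*$. Reading the resulting exact sequence back in $\Ab$ gives, for every tower $x_0\stackrel{\phi_0}\to x_1\to\cdots$,
\[0\to\lim{}^{(1)}\CT(\Sigma x_n,y)\to\CT(\hocolim x_n,y)\to\lim\CT(x_n,y)\to0.\]
Since $\Sigma$ is an autoequivalence and $y$ ranges over all of $\CT$, requiring $\lim{}^{(1)}\CT(\Sigma x_n,y)=0$ for all $y$ is the same as requiring $\lim{}^{(1)}\CT(x_n,y)=0$ for all $y$. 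Hence the displayed condition, over all $x,y\in\CT$ and all $\FF$-phantom towers of $x$, is equivalent to $\CT(\hocolim x_n,y)=0$ for all such data, that is, by Yoneda, to: $\hocolim x_n=0$ for every $\FF$-phantom tower of every object of $\CT$.

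For necessity, assume $(\CP,\FF)$ generates $\CT$ and fix an $\FF$-phantom tower $x_0\stackrel{\phi_0}\to x_1\to\cdots$. The functor $\varphi_*\circ H_\CT:\CT\to\md(\CP)$ is homological and, by Proposition~\ref{fcop} (using that $(\CP,\FF)$ is $\aleph_1$-perfect), preserves countable coproducts; moreover $(\varphi_*\circ H_\CT)(\Sigma^i\phi_n)=0$ for all $i\in\Z$ and $n\geq0$, because $\phi_n\in\FF$ and $\FF$ is stable under (de)suspension. Corollary~\ref{fhocol0} then yields $(\varphi_*\circ H_\CT)(\hocolim x_n)=0$, i.e.\ $\CT(p,\hocolim x_n)=0$ for every $p\in\CP$, and since the projective class generates, $\hocolim x_n=0$ (this is the argument of Lemma~\ref{hocolimis0}). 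By the equivalence of the previous paragraph, the displayed condition holds.

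For sufficiency, assume the displayed condition and let $z\in\CT$ satisfy $\CT(p,z)=0$ for all $p\in\CP$; we must deduce $z=0$. Then $\CT(p,1_z)=0$ for every $p\in\CP$, so $1_z\in\FF$, and therefore the constant tower $z\stackrel{1_z}\to z\stackrel{1_z}\to\cdots$, with $p_n=0\in\CP$ and the triangles $\Sigma^{-1}z\to0\to z\stackrel{1_z}\to z$ (rotations of the triangle $z\stackrel{1_z}\to z\to0\to\Sigma z$), is a genuine $\FF$-phantom tower of $z$. Feeding it into the displayed condition, the inverse system $\bigl(\CT(z,y)\bigr)_n$ has identity transition maps, so $\CT(z,y)=\lim_n\CT(z,y)=0$ for every $y\in\CT$, whence $z=0$ by Yoneda. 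The main obstacle is the first step: assembling the contravariant Milnor sequence from Lemma~\ref{fhocolim} through $\Ab\opp$ and carefully absorbing the suspension in the $\lim{}^{(1)}$-term; once the condition has been recast as ``$\hocolim x_n=0$ for every phantom tower'', matching it with generation is routine, the point to notice being that a nonzero constant phantom tower with identity maps exists exactly when generation fails.
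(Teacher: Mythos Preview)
Your proof is correct, and the sufficiency half is identical to the paper's: both feed the constant tower $z\xrightarrow{1}z\xrightarrow{1}\cdots$ (a legitimate $\FF$-phantom tower once $1_z\in\FF$) into the hypothesis and read off $\CT(z,-)=0$.

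The necessity half differs in organization. You first set up the contravariant Milnor sequence
\[
0\to\lim{}^{(1)}\CT(\Sigma x_n,y)\to\CT(\hocolim x_n,y)\to\lim\CT(x_n,y)\to0
\]
(by pushing Lemma~\ref{fhocolim} through $\Ab\opp$) and use it to recast the displayed vanishing, over all $y$, as $\hocolim x_n=0$; then you invoke the argument of Lemma~\ref{hocolimis0} to get this from $\aleph_1$-perfectness plus generation. The paper instead bypasses the Milnor sequence entirely: it writes down the four--term sequence
\[
0\to\lim\CT(x_n,y)\to\prod\CT(x_n,y)\xrightarrow{(1-\phi)_*}\prod\CT(x_n,y)\to\lim{}^{(1)}\CT(x_n,y)\to0,
\]
shows $\CT(p,\phi)=0$ for all $p\in\CP$ (this is exactly where $\aleph_1$-perfectness enters, to identify $\CT(p,\coprod x_n)$ with the coproduct in $\md(\CP)$), deduces that $1-\phi$ itself is an isomorphism by generation, and reads off both vanishings at once. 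Your route is more conceptual and makes the link with Lemma~\ref{hocolimis0} and the later Theorem~\ref{xishocolim} transparent; the paper's is slightly more self-contained and yields the marginally sharper intermediate statement that $1-\phi$ is an isomorphism (equivalent to $\hocolim x_n=0$, of course, but obtained without passing through any Milnor sequence).
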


\begin{proof} Let show the sufficiency first. If $x\in\CT$ has the
  property $\CT(p,x)=0$ for all $p\in\CP$, then $1_x\in\FF$ and  a
  $\FF$-phantom tower of $x$ is
\[x=x_0\stackrel{1_x}\to x_1=x\stackrel{1_x}\to x_2=x\to\cdots.\]
Then $0=\displaystyle{\lim_{n\in\N}}\CT(x_n,x)=\CT(x,x)$, so
$x=0$.

Now we show the necessity. Let $x,y\in\CT$ and consider an
$\FF$-phantom tower of
  $x$ as above. Applying the functor $\CT(-,y)$ to this tower, we
  obtain a sequence of abelian groups:
  \[\CT(x,y)=\CT(x_0,y)\stackrel{(\phi_0)_*}\leftarrow\CT(x_1,y)
  \stackrel{(\phi_1)_*}\leftarrow\CT(x_2,y)\stackrel{(\phi_2)_*}\leftarrow\CT(x_3,y)
\leftarrow\cdots.\] Computing the derived functors of the
  limit of such a sequence in the usual manner, we know that
  $\lim^{(n)}$ is zero for $n\geq2$ and $\lim$, $\lim^{(1)}$ are
given by the exact sequence:
  \[0\to\lim_{n\in\N}\CT(x_n,y)\to\prod_{n\in\N}\CT(x_n,y)
\stackrel{(1-\phi)_*}\to\prod_{n\in\N}\CT(x_n,y)\to\lim_{n\in\N}{^{(1)}}\CT(x_n,y)\to0,\]
where $\phi:\coprod_{n\in\N}x_n\to\coprod_{n\in\N}x_n$ is
constructed as in the definition of the homotopy colimit. Applying
$\CT(p,-)$  to the commutative squares which define $\phi$, we
obtain also commutative squares: \[\diagram
\CT(p,x_n)\rto\dto_{0=\CT(p,\phi_n)}&
\CT(p,\displaystyle{\coprod_{n\in\N}}x_n)\dto^{\CT(p,\phi)}\\
\CT(p,x_{n+1})\rto &\CT(p,\displaystyle{\coprod_{n\in\N}}x_n)
\enddiagram\ (n\in\N),\] for all $p\in\CP$.  According to
Proposition \ref{fcop}, the $\aleph_1$-perfectness of $(\CP,\FF)$
means that $\CT(-,\coprod_{n\in\N}x_n)|_\CP$ is the coproduct in
$\md(\CP)$ of the set \[\{\CT(-,x_n)|_\CP\mid n\in\N\},\] thus we
deduce $\CT(p,\phi)=0$. Now
$\CT(p,1-\phi)=\CT(p,1)-\CT(p,\phi)=\CT(p,1)$ is an isomorphism,
for all $p\in\CP$, so $1-\phi$ is an isomorphism, because
$(\CP,\FF)$ generates $\CT$. Consequently
\[\lim_{n\in\N}\CT(x_n,y)=0=\lim_{n\in\N}{^{(1)}}\CT(x_n,y).\]
\end{proof}

\begin{remark}\label{chris} {The hypotheses of
Proposition \ref{pgen} are almost identical with those of
\cite[Proposition 4.4]{C}, except the fact that we require, in
addition, the $\aleph_1$-perfectness for $(\CP,\FF)$. Moreover,
the conclusion of \cite[Proposition 4.4]{C} (namely: the Adams
spectral sequence abutting $\CT(x,y)$ is conditionally convergent)
is equivalent to our conclusion ($\lim$ and $\lim^{(1)}$ to be
zero). The proofs are also almost identical. Despite that, we have
given a detailed proof, because, without our additional condition,
we do not see how we can conclude, with our notations, that
$\CT(p,\phi)=0$. Thus we fill a gap existing in the proof of
\cite[Proposition 4.4]{C}, due to the missing assumption of
$\aleph_1$-perfectness. On the other hand, we do not have a
counterexample showing that the conclusion cannot be inferred
without this assumption, so the problem is open. Note also that
the terms of the Adams spectral sequence of \cite{C} do not
depend, for sufficiently large indices, of the choice of the
$\FF$-projective resolution of $x\in\CT$, so the conclusion of
Proposition \ref{pgen} may be formulated simply: The Adams
spectral sequence abutting $\CT(x,y)$ is conditionally convergent,
for any two $x,y\in\CT$.}
\end{remark}

\begin{theorem}\label{xishocolim}
Let $(\CP,\FF)$ be an $\aleph_1$--perfectly generating projective
class in $\CT$. Then for every $x\in\CT$, and every choice
\[0=x^0\to x^1\to x^2\to x^3\to\cdots\] of an $\FF$-cellular tower
for $x$ we have $\hocolim x^n\cong x$.
\end{theorem}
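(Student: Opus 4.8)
The plan is to compare the natural map $\hocolim x^n \to x$ coming from the sequence $\Sigma^{-1}x \to \hocolim x_n \to \hocolim x^n \to x$ with the object $x$ by testing it against the projective objects, and then invoke that $(\CP,\FF)$ generates $\CT$. First I would recall from Proposition \ref{exseq} that, writing $G = \varphi_*\circ H_\CT$, we have $\colim G(x^n) \cong G(x)$ and $\colim{}^{(1)}G(x^n)=0$, where the maps in the tower $G(x^0)\to G(x^1)\to\cdots$ are those induced by the $\FF$-cellular tower. On the other hand, $G$ is a homological functor $\CT \to \md(\CP)$ which preserves countable coproducts (indeed $\md(\CP)$ is an abelian AB4 category and, by $\aleph_1$-perfectness together with Proposition \ref{fcop}, $\varphi_*$ preserves countable coproducts). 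Hence Lemma \ref{fhocolim} applies to the $\FF$-cellular tower and yields a Milnor exact sequence $0\to \colim G(x^n) \to G(\hocolim x^n) \to \colim{}^{(1)}G(\Sigma x^n)\to 0$.

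The key step is to kill the $\colim{}^{(1)}$ term. Applying the same reasoning to the suspended tower $\Sigma x^0 \to \Sigma x^1 \to \cdots$, which is an $\FF$-cellular tower for $\Sigma x$ (since $\FF$ and $\CP$ are stable under suspension, the shifted octahedral diagram still has the required triangles $\Sigma p_n \to \Sigma x^n \to \Sigma x^{n+1}\to \Sigma^2 p_n$), Proposition \ref{exseq} gives $\colim{}^{(1)} G(\Sigma x^n)=0$ as well. Therefore the Milnor sequence collapses to an isomorphism $G(\hocolim x^n) \xrightarrow{\ \sim\ } G(x)$, and this isomorphism is precisely $G$ applied to the canonical map $\hocolim x^n \to x$ — here I would check that the comparison map $\colim G(x^n)\to G(\hocolim x^n)$ of Lemma \ref{fhocolim}, composed with the Proposition \ref{exseq} identification $G(x)\cong \colim G(x^n)$, agrees with $G$ of the natural map, which is a diagram-chase through the defining triangle of the homotopy colimit.

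The final step is purely formal: since $(\CP,\FF)$ generates $\CT$, the functor $\varphi\circ H_\CT = G$ (as a functor to $\md(\CP)$) reflects isomorphisms, as noted right after Proposition \ref{fcop}. A morphism $\alpha : \hocolim x^n \to x$ with $G(\alpha)$ an isomorphism is thus an isomorphism in $\CT$, giving $\hocolim x^n \cong x$. The main obstacle I anticipate is the bookkeeping in the middle step — verifying that the abstract Milnor comparison map really is $G$ of the geometric map $\hocolim x^n\to x$, rather than merely some abstract isomorphism $G(\hocolim x^n)\cong G(x)$; one has to trace the map through the triangle $\coprod x^n \xrightarrow{1-shift} \coprod x^n \to \hocolim x^n \to \Sigma\coprod x^n$ and the exact sequence of Proposition \ref{exseq} simultaneously, using that the Yoneda-type functor $G$ turns that triangle into a long exact sequence whose connecting pieces are exactly the $\colim$ and $\colim^{(1)}$ of the tower.
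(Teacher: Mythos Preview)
Your proposal is correct and follows essentially the same route as the paper: apply the homological, countable-coproduct-preserving functor $G=\varphi_*\circ H_\CT$ to the homotopy-colimit triangle, compare with the exact sequence of Proposition~\ref{exseq} to obtain $G(\hocolim x^n)\cong G(x)$ induced by the canonical map, and then invoke that $(\CP,\FF)$ generates $\CT$ so $G$ reflects isomorphisms. The only cosmetic difference is that you route the comparison through the Milnor sequence of Lemma~\ref{fhocolim} and explicitly kill $\colim^{(1)}G(\Sigma x^n)$ by observing the suspended tower is cellular for $\Sigma x$, whereas the paper compares the two exact sequences directly; the content is the same.
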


\begin{proof}
The homotopy colimit of the $\FF$-cellular tower above is
constructed via triangle
\[\coprod_{n\in\N}x^n\stackrel{1-shift}\longrightarrow\coprod_{n\in\N}
x^n\rightarrow\hocolim x^n\to\Sigma\coprod_{n\in\N}x^n.\] We apply
to this triangle the homological functor $\varphi_*\circ H_\CT$
which commutes with countable coproducts. Comparing the resulting
exact sequence with the exact sequence given by Proposition
\ref{exseq}, we obtain a unique isomorphism \[(\varphi_*\circ
H_\CT)(\hocolim x^n)\to(\varphi_*\circ H_\CT)(x),\] which must be
induced by the map $\hocolim x^n\to x$. The generating hypothesis
tells us that $\hocolim x^n\cong x$.
\end{proof}

Recall that $\aleph_1$--localizing subcategory of $\CT$ means
triangulated and closed under countable coproducts.

\begin{corollary}\label{aleph1}
If $(\CP,\FF)$ is an $\aleph_1$--perfectly generating projective
class in $\CT$, then $\CT$ is the smallest $\aleph_1$--localizing
subcategory of $\CT$, which contains $\CP$.
\end{corollary}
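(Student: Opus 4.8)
The plan is to deduce this directly from Theorem \ref{xishocolim}. Write $\CL$ for the intersection of all $\aleph_1$-localizing subcategories of $\CT$ that contain $\CP$; this family is nonempty (it contains $\CT$ itself) and an intersection of $\aleph_1$-localizing subcategories is again triangulated and closed under countable coproducts, so $\CL$ is the smallest $\aleph_1$-localizing subcategory containing $\CP$. It therefore suffices to prove $\CL=\CT$, and since a triangulated subcategory is closed under isomorphisms, it is enough to show that an arbitrary $x\in\CT$ is isomorphic to an object of $\CL$.

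Fix $x\in\CT$ and choose an $\FF$-cellular tower $0=x^0\to x^1\to x^2\to\cdots$ for $x$, so that for each $n$ there is a triangle
\[p_n\to x^n\to x^{n+1}\to\Sigma p_n\]
with $p_n\in\CP$, as constructed in Section \ref{pcat}. First I would show by induction that $x^n\in\CL$ for every $n$: indeed $x^0=0\in\CL$, and if $x^n\in\CL$, then since $p_n\in\CP\subseteq\CL$ and $\CL$ is a triangulated subcategory, the third vertex $x^{n+1}$ of the above triangle lies in $\CL$. Next, because $\CL$ is closed under countable coproducts, $\coprod_{n\in\N}x^n\in\CL$, and then, using that $\hocolim x^n$ is by definition the third vertex of the triangle
\[\coprod_{n\in\N}x^n\stackrel{1-shift}\longrightarrow\coprod_{n\in\N}x^n\to\hocolim x^n\to\Sigma\coprod_{n\in\N}x^n,\]
another application of closure of $\CL$ under triangles gives $\hocolim x^n\in\CL$. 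By Theorem \ref{xishocolim} we have $x\cong\hocolim x^n$, hence $x\in\CL$. As $x$ was arbitrary, $\CT=\CL$, which is the assertion.

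This argument is essentially formal once Theorem \ref{xishocolim} is available, so there is no serious obstacle; the only points requiring a word of justification are the existence of a smallest $\aleph_1$-localizing subcategory containing $\CP$ (handled above) and the stability of $\CL$ under homotopy colimits of towers, which is immediate from closure under countable coproducts together with closure under cones. One should also simply remark that $\CP\subseteq\CL$ by construction, so that the cellular-tower objects $x^n$, built by iterated cones on objects of $\CP$, indeed remain inside $\CL$.
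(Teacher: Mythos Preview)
Your proof is correct and follows essentially the same route as the paper's own argument: show by induction on $n$ that each term $x^n$ of an $\FF$-cellular tower lies in the smallest $\aleph_1$-localizing subcategory containing $\CP$, use closure under countable coproducts and triangles to deduce that $\hocolim x^n$ lies there, and conclude via Theorem \ref{xishocolim}. The only difference is that you spell out more explicitly the existence of the smallest $\aleph_1$-localizing subcategory (as an intersection) and the closure under isomorphisms, which the paper leaves implicit.
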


\begin{proof}
Let $x\in\CT$ and let \[0=x^0\to x^1\to x^2\to x^3\to\cdots\] be
an $\FF$-cellular tower for $x$. Since for every $n\geq0$ there
exits a triangle $p_n\to x_n\to x_{n+1}\to\Sigma p_n$, with
$p_n\in\CP$ (see the definition of an $\FF$-cellular tower), we
may see inductively that $x_n$ belongs to the smallest
triangulated subcategory of $\CT$ which contains $\CP$. Now
$\hocolim x^n$ belongs to the smallest $\aleph_1$-localizing
subcategory of $\CT$ which contains $\CP$, and the conclusion
follows by Theorem \ref{xishocolim}.
\end{proof}

\begin{remark}\label{hocolex}
Let $(\CP,\FF)$ be an $\aleph_1$--perfectly generating projective
class in $\CT$, and $x\in\CT$. If we chose an $\FF$-phantom tower
\[\Sigma^{-1}x=x_0\stackrel{\phi_0}\to x_1\stackrel{\phi_1}\to
x_2\stackrel{\phi_2}\to x_3\to\cdots\] and an $\FF$-cellular tower
\[0=x^0\to x^1\to x^2\to x^3\to\cdots\] for $x$, then $\hocolim
x_n=0$ by Lemma \ref{hocolimis0}, and $\hocolim x^n\cong x$ by
Theorem \ref{xishocolim}. Thus the triangle
$\Sigma^{-1}x\to\hocolim x_n\to\hocolim x^n\to x$ is trivially
exact.
\end{remark}

\begin{remark}
A filtration analogous to that of Theorem \ref{xishocolim}, for
the case of well--generated triangulated categories may be found
in \cite[Lemma B 1.3]{N}.
\end{remark}

\section{Brown representability via perfect projective
classes}\label{brpc}

For two projective classes $(\CP,\FF)$ and $(\CQ,\FG)$, we define
the {\em product} by
\begin{align*}\CP*\CQ=\aadd\{x\in\CT\mid&\hbox{there is a triangle }\\
&q\to x\to p\to\Sigma q\hbox{ with }p\in\CP,q\in\CQ\},\end{align*}
and $\FF*\FG=\{\phi\psi\mid\phi\in\FF,\psi\in\FG\}$. Generally by
$\aadd$ we understand the closure under finite coproducts and
direct factors. Since in our case the closure under arbitrary
coproducts is automatically fulfilled, $\aadd$ means here simply
the closure under direct factors. Thus $(\CP*\CQ,\FF*\FG)$ is a
projective class, by \cite[Proposition 3.3]{C}.

If $(\CP_i,\FF_i)$ for $i\in I$ is a family of projective classes,
then
\[\left(\Add\left(\bigcup_I\CP_i\right),\bigcap_I\FF_i\right)\] is
also a projective class by \cite[Proposition 3.1]{C}, called the
{\em meet} of the above family.

In a straightforward manner we may use the octahedral axiom in
order to show that the product defined above is associative. We
may also observe without difficulties that the product of two
(respectively the meet of a family of) $\kappa$-perfect projective
classes is also $\kappa$-perfect, where $\kappa$ is an arbitrary
regular cardinal.

Consider now a projective class $(\CP,\FF)$ in $\CT$. We define
inductively $\CP^{*0}=\{0\}$, $\FF^{*0}=\CT^\to$ and
$\CP^{*i}=\CP*\CP^{*(i-1)}$, $\FF^{*i}=\FF*\FF^{*(i-1)}$, for
every non--limit ordinal $i>0$. If $i$ is a limit ordinal then
$(\CP^{*i},\FF^{*i})$ is defined as the meet of all
$(\CP^{*j},\FF^{*j})$ with $j<i$. Therefore $(\CP^{*i},\FF^{*i})$
is a projective class for every ordinal $i$, which is called the
{\em $i$-th power} of the projective class of $(\CP,\FF)$ (see
also \cite{C}, for the case of ordinals less or equal to the first
infinite ordinal). Clearly we have $\CP^{*j}\subseteq\CP^{*i}$,
for all ordinals $j\leq i$.

\begin{remark}\label{xninpn} We can inductively see that for $x\in\CT$
it holds $x^n\in\CP^{*n}$ for all $n\in\N$, where $x^n$ is the
$n$--th term of an $\FF$-cellular tower of $x$.
\end{remark}

For example, if $\CT$ is compactly generated, and $\CT^c$ denotes
the subcategory of all compact objects, then the projective class
induced by $\CT^c$ is obviously perfect, thus we obtain immediate
consequence of Theorem \ref{xishocolim}:

\begin{corollary}\label{abelig}\cite[Corollary 6.9]{B}
If $\CT$ is compactly generated then any object $x\in\CT$ is the
homotopy colimit $\hocolim x^n$ of a tower $x^0\to x^1\to\cdots$,
where $x^n\in\Add(\CT^c)^{*n}$, for all $n\in\N$.
\end{corollary}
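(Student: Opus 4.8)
The plan is to obtain this as a direct specialization of Theorem \ref{xishocolim} and Remark \ref{xninpn}, so that the only genuine work is to check that the projective class induced by $\CT^c$ is $\aleph_1$--perfectly generating. Since $\CT$ is compactly generated, $\CT^c$ is an essentially small subcategory of $\CT$, and it is closed under suspensions and desuspensions because $\Sigma$ is an autoequivalence and hence preserves compactness. Thus Lemma \ref{projcl} applies: setting $\CP=\Add(\CT^c)$ and $\FF=\{\phi\in\CT^\to\mid\CT(s,\phi)=0\text{ for all }s\in\CT^c\}$, the pair $(\CP,\FF)$ is a projective class, the one ``induced by $\CT^c$''.

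Next I would verify perfectness. By the discussion preceding Lemma \ref{hocolimis0}, $\FF$ is closed under arbitrary coproducts of maps precisely when: for every family $x_i\to y_i$ ($i\in I$) with $\CT(s,x_i)\to\CT(s,y_i)$ surjective for all $i$ and all $s\in\CT^c$, the induced map $\CT(s,\coprod x_i)\to\CT(s,\coprod y_i)$ is surjective for all $s\in\CT^c$. But each $s\in\CT^c$ is compact, so $\CT(s,\coprod x_i)\cong\coprod\CT(s,x_i)$ and likewise for the $y_i$, and a coproduct of epimorphisms of abelian groups is an epimorphism; hence $(\CP,\FF)$ is perfect, in particular $\aleph_1$--perfect. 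For the generating condition, note that $\CT(p,x)=0$ for all $p\in\CP=\Add(\CT^c)$ is equivalent to $\CT(s,x)=0$ for all $s\in\CT^c$, and the latter forces $x=0$ by the very definition of a compactly generated category; so $(\CP,\FF)$ generates $\CT$.

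Finally, fix $x\in\CT$ and choose an $\FF$--cellular tower $0=x^0\to x^1\to x^2\to\cdots$ of $x$ as constructed in Section \ref{pcat}. Theorem \ref{xishocolim} gives $\hocolim x^n\cong x$, while Remark \ref{xninpn} gives $x^n\in\CP^{*n}=(\Add(\CT^c))^{*n}$ for every $n\in\N$. This is exactly the assertion (recovering \cite[Corollary 6.9]{B}).

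I do not expect a real obstacle: the corollary is a specialization of Theorem \ref{xishocolim}. The only points deserving a line of care are the observation that $\Sigma^{\pm1}$ preserves $\CT^c$, so that Lemma \ref{projcl} is applicable, and the verification that $\FF$ is closed under coproducts — that is, that compactness of the objects of $\CT^c$ is precisely what makes the induced projective class perfect, hence $\aleph_1$--perfect.
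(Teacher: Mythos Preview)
Your proposal is correct and follows the same approach as the paper: the paper simply remarks that the projective class induced by $\CT^c$ is ``obviously perfect'' and then records the corollary as an immediate consequence of Theorem~\ref{xishocolim}, while you spell out these verifications in detail and make explicit the appeal to Remark~\ref{xninpn} for $x^n\in\CP^{*n}$. There is nothing to add.
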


Consider a contravariant functor $F:\CT\to\Ab$. For a full
subcategory $\CC$ of $\CT$, we consider the comma category $\CC/F$
with the objects being pairs of the form $(x,a)$, where $x\in\CC$
and $a\in F(x)$, and maps
\[(\CC/F)((x,a)(y,b))=\{\alpha\in\CT(x,y)\mid F(\alpha)(b)=a\}.\]
Motivated by \cite{NR} it is interesting to find weak terminal
objects in $\CT/F$, that is objects $(t,b)\in\CT/F$, such that for
every $(x,a)\in\CT/F$ there is a map
$(x,a)\to(t,b)\in(\CT/F)^\to$. Another equivalent formulation of
this fact is that the natural transformation $\CT(-,t)\to F$ which
corresponds under the Yoneda isomorphism to $b\in F(t)$ is an
epimorphism. The statement a) of the following lemma is proved by
the same argument as \cite[Lemma 2.3]{NR}. We include a sketch of
the proof for the convenience of the reader.

\begin{lemma}\label{wtermpq} Let $F:\CT\to\Ab$ be a cohomological functor which sends coproducts into products.
\begin{itemize}
\item[{\rm a)}] If $(\CP,\FF)$ and $(\CQ,\FG)$ are projective
classes in $\CT$ such that $(\CP,\FF)$ is induced by a set and
$\CQ/F$ has a weak terminal object, then $(\CP*\CQ)/F$ has a weak
terminal object. \item[{\rm b)}] If $(\CP_i,\FF_i)$, $i\in I$ are
projective classes in $\CT$ with the meet $(\CP,\FF)$, and
$\CP_i/F$ has a weak terminal object for all $i\in I$ then $\CP/F$
has a weak terminal object.
\end{itemize}
\end{lemma}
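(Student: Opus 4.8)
For part a), the plan is to reduce a weak terminal object for $(\CP*\CQ)/F$ to the data already available: a weak terminal object $(t_\CQ,b_\CQ)$ for $\CQ/F$, and — since $(\CP,\FF)$ is induced by a set $\CS$, which I may assume closed under (de)suspensions — a generating set of objects for $\CP$. First I would recall that $\CS$ being a set, the disjoint union $\coprod_{(s,a)} s$ over all pairs with $s\in\CS$ and $a\in F(s)$ makes sense (here $F(s)$ is a set because $\CT$ has small hom-sets and $F$ is a $\CT$-module up to shift), and the canonical element assembled from the $a$'s gives a weak terminal object $(t_\CP,b_\CP)$ for $\CP/F$; this uses that $F$ sends coproducts to products, exactly as in \cite[Lemma 2.3]{NR}. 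Now given $(t_\CP,b_\CP)\in\CP/F$ and $(t_\CQ,b_\CQ)\in\CQ/F$, I want to glue them along a triangle. The idea is to form the coproduct object $t_\CP\amalg t_\CQ$, equip it with the element $(b_\CP,b_\CQ)\in F(t_\CP)\times F(t_\CQ)=F(t_\CP\amalg t_\CQ)$, and then complete the zero map (or rather, choose a suitable map) so as to land in $\CP*\CQ$; more precisely, by definition of the product, I should produce a triangle $q\to t\to p\to\Sigma q$ with $p\in\CP$, $q\in\CQ$, together with $b\in F(t)$ restricting appropriately.

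The key step is the following gluing construction. Take $(x,a)\in(\CP*\CQ)/F$; by definition of the product (and since $\aadd$ here is just closure under direct factors), up to passing to a direct factor $x$ sits in a triangle $q_x\xrightarrow{\ \mu\ } x\xrightarrow{\ \nu\ } p_x\xrightarrow{\ \delta\ }\Sigma q_x$ with $p_x\in\CP$, $q_x\in\CQ$. Applying the long exact sequence obtained from $F$ (cohomological), the element $F(\mu)(a)\in F(q_x)$... wait — I should instead push $a$ forward. Let me reorganize: from the triangle I get an exact sequence $F(p_x)\xrightarrow{F(\nu)}F(x)\xrightarrow{F(\mu)}F(q_x)$. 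The element $F(\mu)(a)\in F(q_x)$, viewed in $\CQ/F$, admits a map $\beta\colon q_x\to t_\CQ$ with $F(\beta)(b_\CQ)=F(\mu)(a)$. Similarly, I want a map out of $p_x$ into $t_\CP$; but $a$ itself need not come from $F(p_x)$. The standard trick (this is the heart of \cite[Lemma 2.3]{NR}) is: first solve the $\CP$-part, i.e. choose $\gamma\colon p_x\to t_\CP$ with $F(\gamma)(b_\CP)=$ some lift; then the difference $a-F(\nu)(\cdots)$ dies in $F(q_x)$ after the obvious correction, and one completes to a map $x\to t$ where $t$ is built from a triangle $t_\CQ\to t\to t_\CP\to\Sigma t_\CQ$ classified by $\Sigma\beta\circ\delta\circ(\text{a map }t_\CP\to\Sigma q_x)$ — the connecting map has to be chosen so that the octahedron produces the needed compatible element $b\in F(t)$. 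Then $t\in\CP*\CQ$ by construction, and $(t,b)$ is weak terminal. I would carry this out by: (1) constructing $(t_\CP,b_\CP)$; (2) fixing $(t_\CQ,b_\CQ)$ by hypothesis; (3) for each $(x,a)$ running the octahedral/long-exact-sequence diagram chase to produce the triangle defining $t$ and the element $b$; (4) checking weak terminality.

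For part b), the plan is much shorter: let $(\CP,\FF)=\bigl(\Add(\bigcup_I\CP_i),\bigcap_I\FF_i\bigr)$ be the meet, pick weak terminal objects $(t_i,b_i)$ for each $\CP_i/F$, set $t=\coprod_I t_i$ and $b=(b_i)_{i\in I}\in\prod_I F(t_i)=F(\coprod_I t_i)$ (using again that $F$ converts coproducts to products). Then $t\in\Add(\bigcup_I\CP_i)\subseteq\CP$ since each $t_i\in\CP_i$, and given $(x,a)\in\CP/F$ one has $x\in\Add(\bigcup_I\CP_i)$, so up to a direct factor $x$ is a coproduct $\coprod_{j} p_j$ with each $p_j\in\CP_{i(j)}$; the element $a$ corresponds to a family $(a_j)$ with $a_j\in F(p_j)$, each $(p_j,a_j)$ maps to $(t_{i(j)},b_{i(j)})$ hence into $(t,b)$, and assembling gives a map $x\to t$ (and the direct-factor case follows by composing with the split inclusion). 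I would need the small remark that "up to direct factor" does not obstruct weak terminality, since a retract of an object admitting a map to $(t,b)$ also admits one.

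The main obstacle will be step (3) of part a): choosing the connecting morphism for the triangle defining $t$ so that the induced long exact sequences are compatible enough to extract the element $b\in F(t)$ with the two required restriction properties simultaneously — this is where the octahedral axiom and a careful diagram chase in $F$ are needed, and it is exactly the point the authors defer to the argument of \cite[Lemma 2.3]{NR}. Everything else (existence of $(t_\CP,b_\CP)$, the coproduct computation in part b), the passage to direct factors) is routine.
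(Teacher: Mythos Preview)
Your plan for part b) is correct and essentially what the paper does: set $t=\coprod_{i\in I}t_i$, $b=(b_i)_{i\in I}$, and observe this is weak terminal in $\CP/F$.

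Part a), however, has a real gap, and it is not the one you flag. You propose to build a \emph{single} triangle $t_\CQ\to t\to t_\CP\to\Sigma t_\CQ$ and then, for each $(x,a)$, produce a map $(x,a)\to(t,b)$ via the octahedron. But the connecting morphism $t_\CP\to\Sigma t_\CQ$ is a fixed map, and there is no reason a given triangle $q_x\to x\to p_x\stackrel{\delta}\to\Sigma q_x$ should admit a morphism of triangles into your fixed one: for that you would need the square with $\delta$ and the fixed connecting map to commute, and no single choice of connecting map can absorb all possible $\delta$'s. Your step~(3) in fact produces a \emph{different} $t$ for each $(x,a)$, so step~(4) cannot be carried out as stated; taking a coproduct over all $(x,a)$ is not available either, since $(\CP*\CQ)/F$ is a priori a proper class. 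You have used the hypothesis that $(\CP,\FF)$ is induced by a set $\CS$ only to build $(t_\CP,b_\CP)$, but that is not where it is needed.

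The paper's argument, following \cite[Lemma~2.3]{NR}, resolves this by a different reduction. Given $(y,a)$ with $x\to y\to z\to\Sigma x$, $x\in\CQ$, $z\in\CP$, one first replaces $z$ by a genuine coproduct $\coprod_I s_i$ of objects of $\CS$ (at the cost of replacing $y$ by $y\amalg z'$), and then takes the homotopy pushout along the map $x\to q$ into the fixed weak terminal $(q,d)$ of $\CQ/F$. The resulting object $y_1$ now sits in a triangle $q\to y_1\to\coprod_I s_i\stackrel{\gamma}\to\Sigma q$ in which the $\CQ$-term is the \emph{fixed} object $q$. The connecting map $\gamma$ is then determined by a family of maps $s_i\to\Sigma q$, i.e.\ by a subset $J\subseteq\bigcup_{s\in\CS}\CT(s,\Sigma q)$, and this last set is genuinely a set because $\CS$ is. Hence the possible $y_1$'s are, up to a $\CP$-summand, parametrized by the set of such $J$'s, and one can form the weak terminal object as the coproduct $\left(\coprod_J\coprod_{u\in F(y_J)}y_J\right)\amalg p$. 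The essential idea you are missing is this two-step normalization: first fix the $\CQ$-vertex via the weak terminal object of $\CQ/F$, and only then observe that the remaining connecting data ranges over a set because $\CS$ is a set.
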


\begin{proof} a) Let $(q,d)$ be a weak terminal object in
$\CQ/F$, and let $\CS$ be a set which induces the projective class
$(\CP,\FF)$.  Obviously $\CP/F$ has a weak terminal object
$(p,c)$. Consider an object $(y,a)\in(\CP*\CQ)/F$. Thus there is a
triangle $x\to y\to z\to \Sigma x$, with $x\in\CQ$ and $z\in\CP$.
We have $z\amalg z'=\coprod_{i\in I}s_i$ for some $z'\in\CT$. We
construct the commutative diagram in $\CT$ whose rows are
triangles:
\[\diagram
x\rto\ddouble&y\rto\dto&z\rto\dto&\Sigma x\ddouble\\
x\rto^{\alpha}\dto_{f}&y\amalg z'\rto\dto^{g}&\coprod_{i\in I}s_i\rto\ddouble&\Sigma x\dto\\
q\rto_{\beta}&y_1\rto&\coprod_{i\in I}s_i\rto_{\gamma}&\Sigma
q\enddiagram\] We proceeded as follows: The triangle on the second
row is obtained as the coproduct of the initial one with $0\to
z'\to z'\to 0$, and the maps are the canonical injections. For
$d'=F(\alpha)(a,0)\in F(x)$, there is a map
$f:(x,d')\to(q,d)\in(\CQ/F)^\to$, since $(q,d)$ is weak terminal.
The first bottom square of the diagram above is homotopy push--out
(see \cite[Definition 1.4.1 and Lemma 1.4.4]{N}). Clearly
$y_1\in\CP*\CQ$. Since $F$ is cohomological, there is $a_1\in
F(y_1)$ such that $F(\beta)(a_1)=d$ and $F(g)(a_1)=(a,0)$. So if
we find a map $(y_1,a_1)\to(t,b)\in((\CP*\CQ)/F)^\to$ for a fixed
object $(t,b)$, then the conclusion follows.

If we denote by $J\subseteq\bigcup_{s\in\CS}\CT(s,\Sigma q)$ the
set of all maps $s_i\to\coprod_{i\in I}s_i\to\Sigma q$, then
$\gamma$ factors as $\coprod_{i\in
I}s_i\stackrel{\nabla}\longrightarrow\coprod_{s\in J}s\to\Sigma
q$, where $\nabla$ is a split epimorphism. Hence the fibre of
$\gamma$ is isomorphic to $y_J\amalg z''$, for some $z''\in\CP$
and $y_J$ defined as the fibre of the canonical map $\coprod_{s\in
J}s\to\Sigma q$. Therefore $(y,a)$ maps to $(t,b)=(t'\amalg
p,(b',c))$ where
\[(t',b')=\left(\coprod_{J\subseteq\bigcup_{s\in\CS}\CT(s,\Sigma q)}\left(\coprod_{u\in F(y_J)}(y_J,u)\right)\right),\]
so the object $(t,b)$ is weak terminal in $(\CP*\CQ)/F$.

b) If $(t_i,a_i)\in\CP_i/F$ is a weak terminal object, then
$(\coprod_{i\in I}t_i,(a_i)_{i\in I})$ is a weak terminal object
in $\CP/F$.
\end{proof}

By transfinite induction we obtain:

\begin{lemma}\label{wtermpi}
Let $(\CP,\FF)$ be a projective class in $\CT$ which is induced by
a set. For every ordinal $i$ and every cohomological functor
$F:\CT\to\Ab$ which sends coproducts into products, the category
$\CP^{*i}/F$ has a weak terminal object.
\end{lemma}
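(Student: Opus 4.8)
The plan is to prove Lemma \ref{wtermpi} by transfinite induction on the ordinal $i$, using Lemma \ref{wtermpq} for the successor and limit steps. The base case $i=0$ is trivial: $\CP^{*0}=\{0\}$, so $(0,0)$ is a weak terminal object of $\CP^{*0}/F$ (every object $(x,a)$ maps to it, since $a \in F(x)$ must have image $0 \in F(0)$ under the unique map $x \to 0$). For completeness one should also record the case $i=1$, which is \cite[Lemma 2.3]{NR} itself, or more precisely the statement that $\CP/F$ has a weak terminal object when $(\CP,\FF)$ is induced by a set; this is used inside the proof of Lemma \ref{wtermpq}(a) and can be taken as the genuine starting point of the induction.

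First I would handle the successor step. Suppose $i > 0$ is a non-limit ordinal and that $\CP^{*(i-1)}/F$ has a weak terminal object for every cohomological $F$ sending coproducts to products. By definition $\CP^{*i} = \CP * \CP^{*(i-1)}$, and $(\CP,\FF)$ is induced by a set by hypothesis. Applying Lemma \ref{wtermpq}(a) with $(\CQ,\FG) = (\CP^{*(i-1)},\FF^{*(i-1)})$ gives that $(\CP*\CP^{*(i-1)})/F = \CP^{*i}/F$ has a weak terminal object. Note this requires the inductive hypothesis only for the single functor $F$ at hand, so there is no issue with quantifier order.

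Next I would handle the limit step. Suppose $i$ is a limit ordinal and $\CP^{*j}/F$ has a weak terminal object for all $j < i$. By definition $(\CP^{*i},\FF^{*i})$ is the meet of the family $(\CP^{*j},\FF^{*j})_{j<i}$, so $\CP^{*i} = \Add\big(\bigcup_{j<i}\CP^{*j}\big)$. Applying Lemma \ref{wtermpq}(b) with the index family $\{j \mid j < i\}$ yields that $\CP^{*i}/F$ has a weak terminal object: if $(t_j,a_j)$ is weak terminal in $\CP^{*j}/F$, then $(\coprod_{j<i} t_j, (a_j)_{j<i})$ works. One subtlety: Lemma \ref{wtermpq}(b) as stated gives a weak terminal object for $\CP/F$ where $\CP = \Add(\bigcup \CP_i)$, but strictly one should check that passing to $\Add$ — i.e.\ closing under direct factors and coproducts — does not destroy the weak-terminal property; this is immediate since the coproduct $\coprod_{j<i} t_j$ already receives a map from any object of any $\CP^{*j}$, hence from any coproduct of such, hence from any direct factor of such.

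The main obstacle, such as it is, is bookkeeping rather than mathematics: one must make sure the inductive hypothesis is phrased uniformly in $F$ (``for every cohomological $F$ sending coproducts to products, $\CP^{*j}/F$ has a weak terminal object'') so that both Lemma \ref{wtermpq}(a), whose proof internally applies the hypothesis to $F$ itself and to no auxiliary functor, and Lemma \ref{wtermpq}(b) apply cleanly at each stage. Since each step invokes the hypothesis only for the given $F$, the induction goes through with no set-theoretic difficulty, and the transfinite induction closes. I would therefore present the proof as: state the inductive claim with $F$ universally quantified, dispatch $i=0$, invoke Lemma \ref{wtermpq}(a) for successors and Lemma \ref{wtermpq}(b) for limits, and conclude.
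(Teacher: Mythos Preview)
Your proposal is correct and follows exactly the route indicated in the paper, which simply states that the lemma follows by transfinite induction from Lemma \ref{wtermpq}; you have merely spelled out the base, successor, and limit cases in detail. The subtlety you note about passing to $\Add$ is precisely the content of Remark \ref{wtermn}.
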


\begin{remark}\label{wtermn} For finite ordinals, Lemma \ref{wtermpi} is the same as
\cite[Lemma 2.3]{NR}. Note also that Neeman defined the operation
$*$ without to assume the closure under direct factors, but for a
subcategory $\CC$ of $\CT$ such that $(t,b)$ is weak terminal in
$\CC/F$, the same object is weak terminal in $\aadd\CC/F$ too.
\end{remark}

\begin{proposition}\label{wterm}
Let $(\CP,\FF)$ be an $\aleph_1$--perfectly generating projective
class in $\CT$, and let $F:\CT\to\Ab$ be a cohomological functor
which sends coproducts into products. Suppose also that every
category $\CP^{*n}/F$ has a weak terminal object $(t^n,b_n)$, for
$n\in\N$. Then $\CT/F$ has a weak terminal object.
\end{proposition}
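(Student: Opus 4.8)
The plan is to combine the cellular tower decomposition from Theorem \ref{xishocolim} with the weak terminal objects on the finite powers $\CP^{*n}$ supplied by hypothesis, and then pass to the homotopy colimit. So let $(x,a)\in\CT/F$ be an arbitrary object. First I would fix an $\FF$-cellular tower $0=x^0\to x^1\to x^2\to\cdots$ for $x$; by Theorem \ref{xishocolim} we have $x\cong\hocolim x^n$, and by Remark \ref{xninpn} each $x^n$ lies in $\CP^{*n}$. The idea is to build, using the weak terminal objects $(t^n,b_n)$ of $\CP^{*n}/F$ together with the maps in the tower, a \emph{compatible} system of maps $x^n\to T$ into a single object $T$ of $\CT$ carrying a class $b\in F(T)$, in such a way that the induced map on the homotopy colimit produces the desired map $(x,a)\to(T,b)$.

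Here are the steps in order. Step one: reduce to the case $x=\hocolim x^n$ via Theorem \ref{xishocolim}, and transport $a\in F(x)$ to a compatible family $a_n\in F(x^n)$ (the restrictions along $x^n\to x$). Actually it is cleaner not to insist on a strictly compatible family but to work with the homotopy colimit triangle directly. Step two: since $x^n\in\CP^{*n}$ and $(t^n,b_n)$ is weak terminal in $\CP^{*n}/F$, there is for each $n$ a map $f_n:(x^n,a_n)\to(t^n,b_n)$. Step three: form $T'=\coprod_{n\in\N} t^n$ with the class $(b_n)_{n\in\N}\in\prod_n F(t^n)=F(T')$ (here we use that $F$ sends coproducts to products); the maps $f_n$ assemble to $f:\coprod_n x^n\to T'$ with $F(f)$ sending $(b_n)$ to $(a_n)$. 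Step four — and this is the crux — I need to turn this into a map out of $\hocolim x^n$, i.e. I must cofactor $f$ through the map $\coprod x^n\xrightarrow{1-shift}\coprod x^n$ appearing in the defining triangle $\coprod x^n\xrightarrow{1-shift}\coprod x^n\to\hocolim x^n\to\Sigma\coprod x^n$. The composite $f\circ(1-shift):\coprod x^n\to T'$ need not be zero, so I cannot factor directly; instead I enlarge $T'$. Choose a triangle $\coprod x^n\xrightarrow{1-shift}\coprod x^n\xrightarrow{u}C\to\Sigma\coprod x^n$ (so $C\simeq\hocolim x^n\simeq x$), and then, exactly as in the proof of Lemma \ref{wtermpq}a), absorb the obstruction: let $g:\coprod x^n\to T'$ be $f$, form the homotopy push-out of $1-shift$ along $g$ to get $T''$ receiving a map $\coprod x^n\to C\to T''$ and carrying (by cohomologicality of $F$) a class restricting correctly; the new object still has the form "coproduct of $t^n$'s plus a correction term built from fibres", and running over all choices of $g$ (equivalently all classes in $F(y_J)$-type parametrizing sets as in Lemma \ref{wtermpq}) produces a single object $(t,b)$ independent of $(x,a)$.

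The honest difficulty, and where I expect to spend the most care, is precisely Step four: the homotopy colimit triangle is not a short exact sequence, so a map defined on $\coprod x^n$ that is merely "compatible with $shift$ up to the class in $F$" does not automatically descend to $\hocolim x^n$. The remedy is the homotopy push-out trick already used in Lemma \ref{wtermpq}a) — enlarging the target by the fibre of an appropriate map and using that $F$ is cohomological to lift the required element of $F$ — but one must check that the resulting target can be chosen uniformly over all objects $(x,a)\in\CT/F$. For that, the key point is that the relevant maps $\Sigma^{-1}x\to\coprod x^n$ (the shift map's companion, or equivalently the connecting map $\coprod x^n\to\Sigma\coprod x^n$) factor, after splitting off a summand, through a coproduct indexed by a \emph{set} of maps into a fixed coproduct of $t^n$'s; this uses $\aleph_1$-perfect generation (via Theorem \ref{xishocolim} to even have the cellular presentation) and mimics the set-theoretic bookkeeping of Lemma \ref{wtermpq}. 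Assembling the correction terms into $(t,b)=\bigl(\coprod_n t^n\bigr)\amalg(\text{correction})$, with $b$ the corresponding element of $F(t)=\prod\!F(t^n)\times F(\text{correction})$, gives a weak terminal object of $\CT/F$: for any $(x,a)$, the map $(x,a)=( \hocolim x^n,a)\to(t,b)$ is produced by the construction above, which completes the proof.
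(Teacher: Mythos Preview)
Your overall strategy---reduce to the cellular tower and then produce a map into a fixed target---is the right one, and Steps~1--3 are fine. The gap is precisely where you say it is, in Step~4, but the proposed fix does not work.

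The homotopy push-out trick from Lemma~\ref{wtermpq}(a) succeeds there because the object $z\in\CP$ is, up to a summand, a coproduct of objects from a fixed \emph{set} $\CS$; hence the connecting map $\gamma$ factors through a coproduct indexed by a subset of the set $\bigcup_{s\in\CS}\CT(s,\Sigma q)$, and one can take a coproduct over all such subsets. In your situation the source $\coprod_n x^n$ depends on $x$ (and on the chosen cellular tower), and nothing in the hypotheses gives a set bounding the possible maps $g:\coprod_n x^n\to T'$, nor the resulting push-outs $T''$. Your sentence ``the relevant maps \ldots\ factor, after splitting off a summand, through a coproduct indexed by a set of maps into a fixed coproduct of $t^n$'s'' is exactly what would be needed, but it is asserted, not proved---and in fact there is no reason for it to hold: the $x^n$'s are arbitrary objects of $\CP^{*n}$, not coproducts from a fixed set. (Recall that in this proposition $(\CP,\FF)$ is \emph{not} assumed to be induced by a set; only the existence of weak terminal objects in each $\CP^{*n}/F$ is given.)

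The paper avoids this obstacle by a different construction of the target. It fixes the objects $t^n$ once and for all, lets $I$ be the \emph{set} of all towers $t^0\stackrel{\tau_0}\to t^1\stackrel{\tau_1}\to\cdots$ compatible with the $b_n$'s (a set, since its elements are sequences of maps between fixed objects), and sets $(t,b)=\bigl(\coprod_{i\in I}\hocolim_i t^n,\ (b_i)_i\bigr)$. The real work is then, for a given $(x,a)$, to build a \emph{map of towers} from the cellular tower of $x$ into some tower in $I$: this is done inductively, where at stage $n$ one forms the homotopy push-out $y^{n+1}$ of $x^n\to x^{n+1}$ along the already-constructed $f^n:x^n\to t^n$, observes $y^{n+1}\in\CP^{*(n+1)}$, and then uses weak terminality of $(t^{n+1},b_{n+1})$ in $\CP^{*(n+1)}/F$ to continue. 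The Milnor sequence (dual of Lemma~\ref{fhocolim}) is then used to compare $\CT(x,t)$ with $F(x)$, and surjectivity is checked on the $\lim$ and $\lim^{(1)}$ terms separately. Your single global push-out cannot substitute for this step-by-step straightening of the tower map.
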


\begin{proof}
Denote by $I$ the set of all towers
$0=t^0\stackrel{\tau_0}\longrightarrow
t^1\stackrel{\tau_1}\longrightarrow t^2\to\cdots$, satisfying
$F(\tau_n)(b_{n+1})=b_n$, for all $n\in\N$. The set $I$ is not
empty since for all $n\in\N$, we have
$t^n\in\CP^{*n}\subseteq\CP^{*(n+1)}$ and $(t^{n+1},b_{n+1})$ is
weak terminal in $\CP^{*(n+1)}/F$. Denote also by $t_i$ the
homotopy colimits of the tower $i\in I$, and chose $b_i\in F(t_i)$
an element which maps into $(b_n)_{n\in\N}$ via the surjective
(see the dual of Lemma \ref{fhocolim}) map
$F(t_i)\to\displaystyle{\lim_{n\in\N}}F(t^n)$. We claim that
\[(t,b)=\left(\coprod_{i\in I}t_i,(b_i)_{i\in I}\right)\in\CT/F\] is a weak
terminal object.

In order to prove our claim, let $x\in\CT$. As we have seen in
Theorem \ref{xishocolim}, it is isomorphic to the the homotopy
colimit of its $\FF$-cellular tower
$0=x^0\stackrel{\alpha_0}\longrightarrow
x^1\stackrel{\alpha_1}\longrightarrow x^2\to\cdots$, associated
with a choice of an $\FF$-phantom tower. Thus consider the
commutative diagram, whose rows are exact by Lemma \ref{fhocolim}
and whose vertical arrows are induced by the natural
transformation corresponding to $b\in F(t)$ via the Yoneda
isomorphism:
\[\diagram 0\rto&\lim^{(1)}\CT(\Sigma x^n,t)\rto\dto&\CT(x,t)\rto\dto&\lim\CT(x^n,t)\rto\dto&0\\
0\rto&\lim^{(1)}F(\Sigma x^n)\rto&F(x)\rto&\lim F(x^n)\rto&0
\enddiagram\] If we would prove that the two extreme vertical
arrows are surjective, then the middle arrow enjoys the same
property and our work would be done.

For $n\in\N$, we know that $\Sigma x^n\in\CP^{*n}$ and $(t^n,b_n)$
is weak terminal in $\CP^{*n}$, so there is a map $(\Sigma
x^n,a_n)\to(t^n,b_n)\in(\CP^{*n}/F)^\to$ for every element $a_n\in
F(\Sigma x^n)$. Because $I\neq\emptyset$, there exists $i\in I$,
hence we obtain a map
\[(\Sigma x^n,a_n)\to(t^n,b_n)\to(t_i,b_i)\to(t,b)\in(\CT/F)^\to\]
showing that the natural map $\CT(\Sigma x^n,t)\to F(\Sigma x^n)$
is surjective. Therefore the first vertical map in the commutative
diagram above is surjective as we may see from the following
commutative diagram with exact rows:
\[\diagram \prod\CT(\Sigma x^n,t)\rto^{1-shift}\dto&\prod\CT(\Sigma x^n,t)\rto\dto&\lim^{(1)}\CT(\Sigma x^n,t)\rto\dto&0\\
\prod F(\Sigma x^n)\rto^{1-shift}&\prod F(\Sigma
x^n)\rto&\lim^{(1)}F(\Sigma x^n)\rto&0
\enddiagram\]

Let show now that the map $\lim\CT(x^n,t)\to\lim F(x^n)$ is
surjective too. Consider an element $(a_n)\in\lim F(x^n)$, that is
$a_n\in F(x^n)$ such that $a_n=F(\alpha_n)(a_{n+1})$ for all
$n\in\N$. We want to construct a commutative diagram \[\diagram
x^0\rto^{\alpha_0}\dto^{f^0}&x^1\rto^{\alpha_1}\dto^{f^1}&x^2\rto\dto^{f^2}&\cdots\\
t^0\rto_{\tau_0}&t^1\rto_{\tau_1}&t^2\rto&\cdots
\enddiagram\] such that the bottom row is a tower in $I$ and
$F(f^n)(b^n)=a_n$ for all $n\in\N$. We proceed inductively as
follows: $f^0=0$ and $f^1$ comes from the fact that $(t^1,b_1)$ is
weak terminal in $\CP/F$. Suppose that the construction is done
for the first $n$ steps. Further we construct a commutative
diagram in $\CT$, where the rows are triangles and the second
square is homotopy push--out (see \cite[Definition 1.4.1 and Lemma
1.4.4]{N}):
\[\diagram
p_n\rto\ddouble& x^n\rto^{\alpha_n}\dto_{f^n}&x^{n+1}\rto\dto&\Sigma p_n\ddouble\\
p_n\rto&t^n\rto&y^{n+1}\rto&\Sigma p_n\enddiagram\] By
construction $p_n\in\CP$, hence $y^{n+1}\in\CP^{*(n+1)}$. On the
other hand $y^{n+1}$ is obtained via the triangle
\[x^n\stackrel{\left(\begin{array}{c}\alpha_n\\-f^n\end{array}\right)}\longrightarrow x^{n+1}\amalg t^n\to y^{n+1}\to\Sigma x^n,\]
therefore the sequence
\[F(y^{n+1})\to F(x^{n+1})\times F(t^n)\stackrel{\left(F(\alpha_n),-F(f^n)\right)}\longrightarrow F(x^n)\]
is exact in $\Ab$. Because
$F(\alpha_n)(a_{n+1})-F(f^n)(b_n)=a_n-a_n=0$, we obtain an element
$b_{n+1}'\in F(y^{n+1})$ which is sent to $(a_{n+1},b_n)$ by the
first map in the exact sequence above. Thus the two maps
constructed in the homotopy push-out square above are actually
maps $(x^{n+1},a_{n+1})\to(y^{n+1},b_{n+1}')$ respectively
$(t^n,b_n)\to(y^{n+1},b_{n+1}')$ in $\CP^{*(n+1)}/F$. Since
$(t^{n+1},b_{n+1})$ is weak terminal in $\CP^{*(n+1)}/F$, they can
be composed with a map
$(y^{n+1},b_{n+1}')\to(t^{n+1},b_{n+1})\in(\CP^{*(n+1)}/F)^\to$,
in order to obtain a commutative square \[\diagram
x^n\rto^{\alpha_n}\dto_{f^n}&x^{n+1}\dto^{f^{n+1}}\\
t^n\rto_{\tau_n}&t^{n+1}
\enddiagram\] as desired. Denote by $i\in I$ the tower constructed above.
We have a composed map $F(t)\to F(t_i)\to\lim F(t^n)\to\lim
F(x^n)$ which sends $b\in F(t)$ in turn into $b_i$, then into
$(b_n)_{n\in\N}$ and finally into $(a_n)_{n\in\N}$. This shows
that the element $(a_n)_{n\in\N}\in\lim F(x^n)\subseteq\prod
F(x^n)$ lifts to an element lying in $\lim\CT(x^n,t)$ along the
natural map $\prod\CT(x_n,t)\to\prod F(x^n)$ which corresponds to
$b$ via the Yoneda isomorphism, and the proof of our claim is
complete.
\end{proof}

Recall that we say that $\CT$ satisfies the Brown representability
theorem if every cohomological functor $F:\CT\to\Ab$ which sends
coproducts into products is representable.

\begin{theorem}\label{brtf} Let $\CT$ be a triangulated category with
coproducts which is $\aleph_1$-perfectly generated by a projective
class $(\CP,\FF)$. Suppose also that every category $\CP^{*n}/F$
has a weak terminal object, for every $n\in\N$ and every
cohomological functor which sends coproducts into products
$F:\CT\to\Ab$. Then $\CT$ satisfies the Brown representability
theorem.
\end{theorem}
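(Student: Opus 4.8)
The plan is to deduce Theorem \ref{brtf} from the Freyd-style representability theorem of \cite{NR} together with Proposition \ref{wterm}. Recall that \cite[Theorem 0.8]{NR} (or the Freyd--style representability result therein) asserts that if $\CT$ is a triangulated category with coproducts and $F:\CT\to\Ab$ is a cohomological functor sending coproducts to products such that $\CT/F$ has a weak terminal object, then $F$ is representable. So the entire content of the theorem is to verify, under the stated hypotheses, that $\CT/F$ does have a weak terminal object for every such $F$ --- and this is exactly the conclusion of Proposition \ref{wterm}. Thus the proof is essentially one line: fix a cohomological $F:\CT\to\Ab$ sending coproducts to products; by hypothesis $(\CP,\FF)$ is $\aleph_1$-perfectly generating and each $\CP^{*n}/F$ has a weak terminal object, so Proposition \ref{wterm} applies and gives a weak terminal object in $\CT/F$; now invoke \cite{NR} to conclude $F$ is representable.

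The only genuine subtlety is making sure the hypotheses of Proposition \ref{wterm} are met for \emph{every} relevant $F$, not just a fixed one. But the hypothesis of the theorem is phrased precisely to secure this: ``every category $\CP^{*n}/F$ has a weak terminal object, for every $n\in\N$ and every cohomological functor which sends coproducts into products $F$.'' Since $\aleph_1$-perfect generation of $(\CP,\FF)$ is a property of $(\CP,\FF)$ alone, independent of $F$, all the hypotheses of Proposition \ref{wterm} hold for each such $F$, and we are done.

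I expect the main obstacle to be purely bibliographic/formal: ensuring that the version of the Freyd representability theorem from \cite{NR} being cited is stated in exactly the form ``weak terminal object in $\CT/F$ $\Rightarrow$ $F$ representable,'' with no extra hypotheses on $\CT$ (such as well-generation) beyond having coproducts. As the excerpt's introduction already advertises using ``Neeman's Freyd--style representability theorem,'' this matching is available, and the proof reduces to citing it. One could also remark here that Corollary \ref{brt} --- the case where $(\CP,\FF)$ is induced by a set --- follows immediately, since then Lemma \ref{wtermpi} (hence Lemma \ref{wtermpq}) supplies the weak terminal objects in each $\CP^{*n}/F$ automatically, so the extra hypothesis of Theorem \ref{brtf} is vacuous.

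\begin{proof}
Fix a cohomological functor $F:\CT\to\Ab$ which sends coproducts into products. By hypothesis the projective class $(\CP,\FF)$ is $\aleph_1$-perfectly generating, and every category $\CP^{*n}/F$ has a weak terminal object for $n\in\N$. Hence Proposition \ref{wterm} applies and shows that $\CT/F$ has a weak terminal object. By the Freyd--style representability theorem of \cite{NR}, a cohomological functor $\CT\to\Ab$ sending coproducts into products is representable as soon as the comma category $\CT/F$ possesses a weak terminal object; therefore $F$ is representable. As $F$ was arbitrary, $\CT$ satisfies the Brown representability theorem.
\end{proof}
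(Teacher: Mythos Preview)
Your proof is correct and follows essentially the same route as the paper: invoke Proposition~\ref{wterm} to obtain a weak terminal object in $\CT/F$, then appeal to Neeman's Freyd--style representability theorem from \cite{NR} (cited there as Theorem~1.3) to conclude that $F$ is representable. The only difference is cosmetic---you cite a different theorem number in \cite{NR} and add some surrounding commentary---but the substance is identical.
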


\begin{proof} It is shown in \cite[Theorem 1.3]{NR} that $\CT$ satisfies the
Brown representability theorem if and only if every cohomological
functor $F:\CT\to\Ab$ which sends coproducts into products has a
solution object, or equivalently, the category $\CT/F$ has a weak
terminal object. Thus the conclusion follows from this result
corroborated with Proposition \ref{wterm}.
\end{proof}

We will say that $\CT$ is $\aleph_1$-perfectly generated by a set
if it is $\aleph_1$-perfectly generated by a the projective class
induced by that set, in the sense above. Thus Theorem above
together with Lemma \ref{wtermpi} give:

\begin{corollary}\label{brt}
Let $\CT$ be a triangulated category with coproducts which is
$\aleph_1$-perfectly generated by a set. Then $\CT$ satisfies the
Brown representability theorem.
\end{corollary}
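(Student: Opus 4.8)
The plan is to reduce Corollary \ref{brt} entirely to the two preceding results, Theorem \ref{brtf} and Lemma \ref{wtermpi}, so that no genuinely new argument is needed. First I would unwind the definition: saying that $\CT$ is $\aleph_1$-perfectly generated by a set $\CS$ means, by the definition introduced just before Corollary \ref{brt}, that the projective class $(\CP,\FF)$ induced by $\CS$ (in the sense of Lemma \ref{projcl}) is $\aleph_1$-perfect and generates $\CT$. Here one should, strictly speaking, enlarge $\CS$ to be closed under all suspensions and desuspensions --- replacing $\CS$ by $\bigcup_{n\in\Z}\Sigma^n\CS$, which is still a set --- so that Lemma \ref{projcl} genuinely applies; this changes neither the induced $\CP=\Add\CS$ nor the generating property.

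Next I would verify the hypotheses of Theorem \ref{brtf} one at a time. The $\aleph_1$-perfect generation hypothesis is exactly what was just assumed. The remaining hypothesis --- that for every cohomological functor $F:\CT\to\Ab$ sending coproducts into products, and every $n\in\N$, the comma category $\CP^{*n}/F$ has a weak terminal object --- is precisely the content of Lemma \ref{wtermpi}, applied with the ordinal $i=n$, once we observe that $(\CP,\FF)$ is a projective class \emph{induced by a set}, which is the standing hypothesis of that lemma. (Indeed Lemma \ref{wtermpi} gives this for all ordinals, so in particular for all finite $n$, which is all Theorem \ref{brtf} requires.)

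With both hypotheses of Theorem \ref{brtf} in hand, the conclusion is immediate: $\CT$ satisfies the Brown representability theorem. That is the entire proof.

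I do not expect any real obstacle here, since the corollary is designed to be a straightforward specialization: the only points that require a moment's care are the harmless enlargement of $\CS$ to be stable under (de)suspension so that Lemma \ref{projcl} literally applies, and the bookkeeping remark that a projective class induced by a set is in particular induced by a set in the sense needed by Lemma \ref{wtermpi}. All the substantive work --- building the cellular tower, proving $x\cong\hocolim x^n$, and promoting weak terminal objects in $\CP^{*n}/F$ to one in $\CT/F$ --- has already been carried out in Theorem \ref{xishocolim}, Proposition \ref{wterm} and Theorem \ref{brtf}. So the proof is simply: \emph{combine Theorem \ref{brtf} with Lemma \ref{wtermpi}}, exactly as the sentence preceding the corollary announces.
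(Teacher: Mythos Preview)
Your proposal is correct and follows exactly the paper's own approach: the corollary is stated immediately after the sentence ``Thus Theorem above together with Lemma \ref{wtermpi} give:'' and carries no separate proof, so the intended argument is precisely the combination of Theorem \ref{brtf} with Lemma \ref{wtermpi} that you describe. Your additional remarks about closing $\CS$ under (de)suspensions are harmless bookkeeping and do not deviate from the paper's reasoning.
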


\begin{remark}\label{hk}
Our condition $\CT$ to be $\aleph_1$-perfectly generated by a set
is obviously equivalent to the hypothesis of \cite[Theorem A]{KB}.
Therefore Corollary \ref{brt} is the same as \cite[Theorem A]{KB},
but with a completely different proof. Note also that every
well--generated triangulated category in the sense of Neeman
\cite{N} is perfectly generated  by a set, in the sense above, as
it is shown in \cite{KN}.
\end{remark}

\end{document}